          \newcommand{\nc}{\newcommand}
          \nc{\DMO}{\DeclareMathOperator}	
          \DMO{\op}{op}
          \DMO{\ob}{Ob}
          \DMO{\id}{id}
          \DMO{\Map}{Map}
          \DMO{\aut}{Aut}
          \DMO{\cone}{Cone}
          \DMO{\colim}{colim}
          \DMO{\stable}{Ex}
          \DMO{\hocolim}{hocolim}	
          \nc{\xto}{\xrightarrow}
          \nc{\xra}{\xto}
          \nc{\tensor}{\otimes}
          \nc{\Tw}{\mathsf{Tw}}
          \nc{\into}{\hookrightarrow}
          \nc{\sets}{\mathsf{Sets}}
          \nc{\sing}{\mathsf{Sing}}
          \nc{\sset}{\mathsf{sSet}}
          \nc{\chain}{\mathsf{Chain}}
          \nc{\fun}{\mathsf{Fun}}
			\nc{\Etarget}{E}
          \nc{\Cat}{\mathsf{Cat}}
          \nc{\Mod}{\mathsf{Mod}}
          \nc{\inftyCat}{\mathcal{C}\!\operatorname{at}_\infty}
          \nc{\inftyGpd}{\mathcal{G}\!\operatorname{pd}_\infty}
          \nc{\Ainftycat}{\mathcal{C}\!\operatorname{at}_{A_\infty}}
          \nc{\dgcat}{\mathcal{C}\!\operatorname{at}_{dg}}
          \nc{\Ainftycatt}{A_\infty Cat}
          \nc{\dgcatt}{dg Cat}
          \nc{\Kan}{\mathcal{K}\!\operatorname{an}}
          \nc{\eqn}{\begin{equation}}
          \nc{\eqnn}{\begin{equation}\nonumber}
          \nc{\eqnd}{\end{equation}}
          \nc{\eqnnd}{\end{equation}}
          \nc{\enum}{\begin{enumerate}}
          \nc{\enumd}{\end{enumerate}}
          \def\cA{\mathcal A}
          \def\cB{\mathcal B}
          \def\cC{\mathcal C}
          \def\cD{\mathcal D}
          \def\cE{\mathcal E}
          \def\cL{\mathcal L}
          \def\cQ{\mathcal Q}
          \def\cX{\mathcal X}
          \def\cY{\mathcal Y}
          \def\ZZ{\mathbb Z}
          \def\sD{\mathsf D}
          \def\sq{\mathsf q}
			\def\kk{\mathbf k}
          \theoremstyle{definition}
          \newtheorem{theorem}{Theorem}[section]
          \newtheorem{proposition}[theorem]{Proposition}
          \newtheorem{prop}[theorem]{Proposition}
          \newtheorem{lemma}[theorem]{Lemma}
          \newtheorem{definition}[theorem]{Definition}
          \newtheorem{defn}[theorem]{Definition}
          \newtheorem{notation}[theorem]{Notation}
          \newtheorem{remark}[theorem]{Remark}
\nc{\hiro}{\textcolor{blue}}
\title{$\infty$-categorical universal properties of quotients and localizations of $A_\infty$-categories}
\author[$\dagger$]{Yong-Geun Oh}
\author[$\star$]{Hiro Lee Tanaka}
\affil[$\dagger$]{Center for Geometry and Physics (IBS), Pohang, Korea \&
Department of Mathematics, POSTECH, Pohang Korea.}
\affil[$\star$]{Department of Mathematics, Texas State University}
\begin{document}
\maketitle

\begin{abstract}
We show that certain hands-on $A_\infty$-categorical constructions satisfy desirable universal properties in the infinity-category of $A_\infty$-categories. For sufficiently cofibrant $A_\infty$-categories, two models for quotients of $A_\infty$-categories (as constructed by Lyubashenko-Manzyuk and Lyubashenko-Ovisienko), and a model for localizations (as used by Ganatra-Pardon-Shende), satisfy the relevant universal properties. We apply the results here in a companion work to prove a Liouville version of a conjecture of Teleman from the 2014 ICM. 
\end{abstract}

\tableofcontents

\section{Introduction}

Throughout, we fix a base commutative ring $\kk$. All $A_\infty$-categories and functors are assumed $\kk$-linear and (not necessarily strictly) unital; in particular, in the category of $A_\infty$-categories, we take objects to be unital, and all morphisms to be unital functors. 

The aim of this paper is to show that $A_\infty$-categories that have been called quotients and localizations in previous works merit the labels: They indeed satisfy, $\infty$-categorically, the expected universal properties. Such a result is available to us because we now have a satisfactory set-up for the $\infty$-category of $A_\infty$-categories. (A previous version of this work gave a (correct) definition of the $\infty$-category of $A_\infty$-categories, but had a gap in the proof that this $\infty$-category had certain desirable properties. These gaps have been filled thanks to~\cite{cos-over-rings,pascaleff,tanaka-Aoo-units}.)

Let us say what we mean. In the worlds of sets, of spaces, of groups, et cetera, a quotient of an object $A$ by another object $B$ satisfies the following universal property:
	\eqn\label{eqn. universal property quotient}
	\xymatrix{
	B \ar[r] \ar[d] & A \ar[ddrr] \ar[d] \\
	\ast \ar[r] \ar[drrr] & A/B \ar@{-->}[drr]^-{\exists !} \\
	&&&C.
	}
	\eqnd
In words, given any map $A \to C$ such that the composite $B \to A \to C$ factors through $\ast$, there exists a {\em unique} map from $A/B$ to $C$ making the above diagram commute. 

Regardless of what model one makes for an object deserving to be called $A/B$, one must verify the universal property for the model to be of formal use.

Likewise, 
given an $A_\infty$-category $\cA$, a full subcategory $\cB \subset \cA$, 
one can ask for the universal $A_\infty$-category $\cA/\cB$ satisfying a homotopical version of the universal property~\eqref{eqn. universal property quotient}. 

In a world with homotopies and higher homotopies, one never expects a unique functor $\cA/\cB \to \cC$ making a diagram commute; rather, one expects a {\em contractible} collection of functors equipped with data making a diagram commute up to higher homotopies. 

The reader may appreciate the burden of verifying such a homotopical unicity; there is much coherence to carry around. On the flip side, once one has proven that a universal property is satisfied, one has a powerful tool at one's disposal.  Our results here show that concrete models satisfy these powerful universal properties.

\begin{definition}
Following Notation~1.4 of~\cite{tanaka-Aoo-units},
	\eqnn
	\Ainftycat
	\eqnd
denotes the $\infty$-category of $A_\infty$-categories. We define this $\infty$-category as the $\infty$-categorical localization, along quasi-equivalences, of the usual category of $A_\infty$-categories and their functors. 
\end{definition}

\begin{defn}
\label{defn. cofibrant}
We will say that an $A_\infty$-category is {\em cofibrant} if all morphism complexes are homotopically projective. 
\end{defn}

See Remark~1.6, Remark~1.7, and Definition~3.7  of~\cite{tanaka-Aoo-units} for discussion regarding homotopically projective complexes. We use the term cofibrant despite the fact that there is no model structure on the category of $A_\infty$-categories.

Our two main results are as follows. 
Theorem~\ref{theorem. quotient models} is a generalization of Tabuada's paper on Drinfeld's dg quotient~\cite{tabuada-on-dg-quotient}. 

\begin{theorem}[Theorem~\ref{theorem. A oo quotient}] \label{theorem. quotient models}
Let $\cA$ be a cofibrant $A_\infty$-category and fix a full subcategory $\cB \subset \cA$. We let $\sq(\cA|\cB)$ denote the model for the quotient $A_\infty$-category from Lyubashenko-Manzyuk~\cite{lyubashenko-manzyuk}, and we let $\sD(\cA|\cB)$ denote the model from Lyubashenko-Ovsienko~\cite{lyubashenko-ovsienko}. 
Both models satisfy the universal property of quotients in $\Ainftycat$.
\end{theorem}

\begin{theorem}[Theorem~\ref{thm. localization of A oo cats}.]
Let $\cA$ be a cofibrant $A_\infty$-category and fix a collection of (cohomology classes of) morphisms $W \subset H^0\cA$. Then the localization $\cA[W^{-1}]$ as utilized in~\cite{gps-covariant} satisfies the universal property of localizations.
\end{theorem}

\begin{remark}[Necessity of cofibrancy]
The above cofibrancy requirements are necessary. This is because quotients and localizations have universal properties expressed by maps out of them, while mapping spaces out of an object are only well-behaved when the object is cofibrant. To illustrate this point: One would like the morphism space $\hom_{\Ainftycat}(\cA,\cE)$ between two $A_\infty$-categories to have the property that its $\pi_0$ is in bijection with the set of functors $\cA \to \cE$ up to homotopy (i.e., up to natural equivalence). At the same time, because any quasi-equivalence $f: \cA' \to \cA$ is homotopy invertible in $\Ainftycat$, $f^*: \hom_{\Ainftycat}(\cA,\cE) \to \hom_{\Ainftycat}(\cA',\cE)$ is a homotopy equivalence -- and in particular, a bijection on homotopy classes of functors. When $\cA$ and $\cA'$ are not both cofibrant, however, $f^*$ does not induce a bijection on homotopy classes of functors. 

(As a simple example, take $\cA$ to be the one-object $A_\infty$-category given by the ring $\ZZ/2\ZZ$, and let $\cA'$ be the one-object $A_\infty$-category given by a dg algebra freely resolving $\ZZ/2\ZZ$. If $\cE = \cA'$, one functor space is empty, while the other is not.)

The take-away is that when $\cA$ is not cofibrant, the mapping space $\hom_{\Ainftycat}(\cA,\cE)$ does not have a naive interpretation as a space of honest functors $\cA \to \cE$. Such an interpretation is only available when $\cA$ is cofibrant.
\end{remark}

\begin{remark}[Abundance of cofibrancy]
\label{remark. abundance of cofibrancy}
At the same time, suppose $\cA$ is not cofibrant. We may always find a cofibrant $\cA'$ and a quasi-equivalence $\cA' \xrightarrow{\simeq} \cA$ (Remark~1.8 of~\cite{tanaka-Aoo-units}). We can in fact replace arbitrary morphisms $f: \cA \to \cB$ in $\Ainftycat$ to be between cofibrant objects: Take the pullback 
	\eqnn
	\xymatrix{
	P \ar[r] \ar[d] 
		& \cA' \ar[d]^{\simeq} \\
	\cB \ar[r]^f
		&  \cA
	}
	\eqnnd
and note that the lefthand vertical arrow is an equivalence in $\Ainftycat$, being a pullback of an equivalence. Take a cofibrant replacement $\cB' \xrightarrow{\simeq} P$; then any 2-simplex filling the inner horn $\cB' \to P \to \cB$  (there is a contractible space of such 2-simplices) exhibits an arrow $\cB' \to \cB$ which is an equivalence in $\Ainftycat$. Because $\cB'$ is cofibrant, this edge can be interpreted as an honest functor $\cB' \to \cB$, and is homotopy-invertible; in particular, this functor is a quasi-equivalence. Now, any 2-simplex filling the inner horn $\cB' \to P \to \cA'$ exhibits an edge $f' : \cB' \to \cA'$ replacing $f$. 

In the case that $f$ is a fully faithful inclusion, so is $f'$.

Thus, if one seeks a concrete model for the quotient $\cA/\cB$ in $\Ainftycat$, one may take (up to natural equivalence in $\Ainftycat$) the quotient $\cA'/\cB'$ guaranteed by Theorem~\ref{theorem. quotient models}. 

The necessity for such replacements also appears in the setting of dg-categories~\cite{tabuada-on-dg-quotient}, and in many homotopy-theoretic settings (especially those involving model categories).
\end{remark}

\begin{remark}
If $\cB \to \cA$ is a functor of $A_\infty$-categories, the associated pushout along $\cB \to 0$ is a quotient of $\cA$ along the (full subcategory spanned by the) essential image (Proposition~\ref{prop. quotient is full}). In particular, when discussing quotients, one may as well take $\cB$ to be a full subcategory of $\cA$. 
\end{remark}

\subsection{Some remarks for wrapped Fukaya category users}

Let us close the introduction with a final word of motivation. In the last decade, $\infty$-categories have emerged as powerful tools for proving theorems involving higher homotopy coherences. From the outset, Fukaya categories have required such tools. We hope resources like this paper will empower other mathematicians at the interface of  $\infty$-categorical and Fukaya-categorical phenomena. The utility and need for these kinds of techniques have already appeared, for example, in work of Pardon~\cite{pardon} in combinatorially articulating the (non)-dependence of Floer theory definitions on auxiliary data (such as the choice of Hamiltonians; similar techniques appear in~\cite{gps-covariant}), and in the work~\cite{tanaka-pairing, tanaka-exact} to relate the homotopical richness of Lagrangian cobordisms to Floer-theoretic invariants. In~\cite{oh-tanaka-actions} (which precipitated the writing of the present paper) we apply the results here to show that a Liouville action of a Lie group $G$ on a Liouville sector $M$ induces a homotopy coherent action of $G$ on the wrapped Fukaya category of $M$. This proves a conjecture from Teleman's 2014 ICM address in the Liouville setting.

We also note that almost all constructions of Fukaya categories involve morphism complexes that are homotopically projective (in fact, morphism complexes are typically free as graded modules), so cofibrancy conditions are not restrictive.

We also hope the present work helps to illucidate claims made in other works. For example, \cite{gps-covariant} employs the term localization  without exhibiting the full universal property. The present work justifies the terminology through Theorem~\ref{thm. LM model of localization}.

\subsection{Assumptions and omissions}
We assume the reader is fluent with basic techniques in $\infty$-categories. We refer the reader to an older version of this work on the arXiv for a gentler entry.

\subsection{Acknowledgments}
We would like to thank
Paolo Stellari,
Alberto Canonaco,
and Mattia Ornaghi 
for generously discussing their work with us.
We thank
Gabriel Drummond-Cole and
Rune Haugseng
for helpful conversations,
and the anonymous referee whose comments have improved the paper.
The first author was supported by the IBS project IBS-R003-D1.
The second author was supported by
IBS-CGP in Pohang, Korea
the Isaac Newton Institute in Cambridge, England,
National Science Foundation Grant No. DMS-1440140 while the second author was in residence at the Mathematical Sciences Research Institute in Berkeley, California, in Spring 2019,
an NSF DMS-2044557,
and the Sloan Research Fellowship.

\section{Recollections}
By an $\infty$-category, we mean a simplicial set satisfying the weak Kan condition. Many of the recollections here have detailed accounts in~\cite{tanaka-Aoo-units}.

Let $\Ainftycatt$ denote the usual category of $A_\infty$-categories. Let $W$ be the collection of functors $f: \cA \to \cB$ that induce isomorphisms on cohomology -- i.e., $W$ is the collection of quasi-equivalences. By definition, 
	\eqnn
	\Ainftycat := \Ainftycatt[W^{-1}]
	\eqnnd
is the $\infty$-category obtained by localization along $W$.

\begin{remark}
\label{remark. catt to cat}
In particular, one has a canonical functor from (the nerve of) $\Ainftycatt$ to $\Ainftycat$, so commuting diagrams in $\Ainftycatt$ define diagrams in $\Ainftycat$. This allows us to interpret any functor $f: \cA \to \cB$ as an edge in $\Ainftycat$.
\end{remark}

The converse is not true. That is, diagrams (and the edges) in $\Ainftycat$ do not always have straightforward interpretations in $\Ainftycatt$. However:

\begin{theorem} (Theorem~1.5(b) of~\cite{tanaka-Aoo-units}.)
If $\cA$ is cofibrant (Definition~\ref{defn. cofibrant}), then for any $A_\infty$-category $\cD$, one has a homotopy equivalence
	\eqnn
	\hom_{\Ainftycat}(\cA,\cD)
	\simeq
	N(\fun_{A_\infty}(\cA,\cD))^{\sim}.
	\eqnnd
\end{theorem}
In words: Take the $A_\infty$-nerve of the $A_\infty$-category of unital functors from $\cA$ to $\cD$, then take the largest Kan complex contained in this nerve. The theorem states that this Kan complex is homotopy equivalent to the space of maps from $\cA$ to $\cD$ in $\Ainftycat$.

\begin{defn}
Let $f: \cA \to \cD$ be a functor of $A_\infty$-categories. The {\em essential image} of $f$ is full subcategory of $\cD$ spanned by objects $y \in \cD$ such that, for some object $x \in \cA$, $f(x)$ and $y$ are isomorphic in $H^0(\cD)$. 
\end{defn}

\begin{defn}
We say that a functor $\cA \to \cD$ is fully faithful if for every $x,x' \in \ob \cA$, the chain map $\hom_{\cA}(x,x') \to \hom_{\cD}(fx,fx')$ is an isomorphism on cohomology.
\end{defn}

\section{Stability}
Given an $A_\infty$-category $\cA$, there is a natural pretriangulated completion 
	\eqnn
	\Tw \cA
	\eqnnd
whose definition we do not recall here. This construction is often called the twisted complex construction. The dg case is due to Bondal-Kapranov~\cite{bondal-kapranov-enhanced-triangulated}, while the $A_\infty$-version is due to Kontsevich~\cite{kontsevich-hms}. Accounts can be found in
\cite[Section (3l)]{seidel-book},
\cite[Section 7]{keller-intro-to-algebras-modules}, and
\cite[Chapter 12]{bespalov-lyubashenko-manzyuk} -- in this third work, $\Tw \cA$ is instead written $\cA^{\mathsf tr}$.

\begin{remark}
There is another standard way to take a pretriangulated closure of $\cA$. One considers the Yoneda embedding $\cA \to \cA \Mod$. Noting that $\cA\Mod$ has mapping cones, one may consider the smallest full subcategory inside $\cA\Mod$ containing the image of the Yoneda embedding, containing zero, closed under shifts, and closed under mapping cones. This full subcategory models a triangulated closure of $\cA$.

We will prefer to utilize $\Tw \cA$, for the reason that $\Tw$ is functorial (maps $\cA \to \cB$ induce maps $\Tw \cA \to \Tw \cB$ in a way respecting composition on the nose) and admits a natural transformation $\cA \to \Tw \cA$. (In contrast, it is rather delicate to make the Yoneda embedding covariantly functorial without constructing, say, a formula-based model of left Kan extensions in the setting of $A_\infty$-categories, which we do not do here.)
\end{remark}

\begin{prop}
\label{prop. Tw respects quasieqs}
Let $f: A \to B$ be a quasi-equivalence of $A_\infty$-categories. Then the induced functor $\Tw f : \Tw A \to \Tw B$ is also a quasi-equivalence.
\end{prop}

\begin{proof}
Observe that any object in $\Tw\cA$ admits a finite filtration whose associated gradeds are (shifts of) objects of $\cA$. If $f$ is a quasi-equivalence, long exact sequences of cohomology groups of hom-complexes assure that $\Tw f$ is a quasi-equivalence.
\end{proof}

The following are also well-known, and their verifications can be found at the references below, or follow straight from the definitions.

\begin{prop}
\label{prop. Tw facts}
\enum[(a)]
\item $\Tw$ extends to morphisms to define a functor $\Ainftycatt \to \Ainftycatt$. 

\item\label{item. A to Tw A} Moreover, one has a natural (in the $\cA$ variable) functor $u_{\cA}: \cA \to \Tw \cA$ (Proposition~12.3 of~\cite{bespalov-lyubashenko-manzyuk}.)

\item For every $\cA$, $u_\cA$ is fully faithful.

\item For any $\cA$, the two natural functors $\Tw \cA \to \Tw \Tw \cA$ -- one is $u_{\Tw\cA}$, while the other applies $\Tw$ to $u_{\cA}$ -- are both quasi-equivalences (Proposition~12.11 of~\cite{bespalov-lyubashenko-manzyuk}). 

\item
\label{item. Tw of Fun is functors into Tw} There is a natural functor $\Tw \fun_{A_\infty}(\cA,\cB) \to \fun_{A_\infty}(\cA,\Tw\cB)$ and this functor is an equivalence of $A_\infty$-categories. (Section~10.47 and Proposition~11.44 of~\cite{bespalov-lyubashenko-manzyuk}.)

\enumd
\end{prop}

\begin{remark}\label{remark. Tw idempotent}
By Proposition~\ref{prop. Tw respects quasieqs} and 
by functoriality of localization, $\Tw$ induces a functor of $\infty$-categories
	\eqnn
	\Tw: \Ainftycat \to \Ainftycat
	\eqnd
where we abuse the notation.
Moreover, by the final claim in Proposition~\ref{prop. Tw facts}, $\Tw$ is an idempotent functor of $\infty$-categories, so is a localization of $\infty$-categories -- i.e., $\Tw$ admits a fully faithful right adjoint (given by the full inclusion of the essential image of $\Tw$ -- see Proposition~5.2.7.4 of~\cite{htt}).

We denote this full subcategory by 
	\eqnn
	\Ainftycat^{\stable} \subset \Ainftycat.
	\eqnd
We call this the full subcategory of {\em pretriangulated} or {\em stable} $A_\infty$-categories. 
\end{remark}

\begin{remark}\label{remark. universal property of Tw}
Let $\cA$ be an arbitrary $A_\infty$-category and $\cD$ a stable $A_\infty$-category. Then, by adjunction, the map of mapping spaces
	\eqnn
	\hom_{\Ainftycat}(\Tw\cA,\cD) \to
	\hom_{\Ainftycat}(\cA,\cD)
	\eqnd
is a homotopy equivalence. Informally, one can interpret the universal property of $\cA \to \Tw\cA$ as follows: If $\cD$ is stable, then any functor $f: \cA \to \cD$ extends (uniquely up to contractible choice) to a functor from $\Tw\cA$:
	\eqnn
	\xymatrix{
	\cA \ar[d] \ar[r]^f & \cD \\
	\Tw\cA \ar@{-->}[ur]_{\exists !} &.
	}
	\eqnd
\end{remark}

\begin{remark}
\label{remark. every functor is exact}
Any functor between $A_\infty$-categories preserves mapping cones. 
Indeed, given $f: \cA \to \cD$, consider the induced functor $\Tw f : \Tw \cA \to \Tw \cD$. By definition of $\Tw$, for any closed degree zero morphism $\alpha: X \to Y$ in $\cA$, we have a natural identification $(\Tw f) (\cone(\alpha)) \cong \cone(\Tw f(\alpha)) \cong \cone(f(\alpha))$. Now suppose that $\cone(\alpha)$ is in the essential image of $\cA \to \Tw \cA$ -- so $\cone(\alpha)$ is isomorphic in $H^0 \cA$ to some $Q \in \ob \cA$. By the naturality of $\cA \to \Tw \cA$, it follows that $\Tw(f)(\cone(\alpha))$ is isomorphic to $f(Q)$ in $H^0(\cD)$. In other words, any mapping cone of $\alpha$ is sent by $f$ to a mapping cone of $f(\alpha)$. 

Even better, the natural exact sequence $X \to Y \to \cone(\alpha)$, together with the natural homotopy of the composition to the zero map, is sent to the sequence $f(X) \to f(Y) \to f(\cone(\alpha))$ and a homotopy from this composition to zero. 

A choice of data realizing the the isomorphism $\cone(\alpha) \cong Q \in H^0 \cA$  induces a sequence $X \to Y \to Q$ (together with a null homotopy of this composition to the 0 map) in $\cA$, and $f$ sends this to a sequence $f(X) \to f(Y) \to f(Q)$  (with a null homotopy of this composition). The sequences in $\cA$ and in $\cD$ are mapping cone sequences in the sense that for any test object $w \in \ob \cA$, the sequence of cochain complexes induced by $\hom_{\cA}(w,-)$ is a mapping cone sequence in the usual dg-category of cochain complexes (and likewise for any $w \in \ob \cD$). 

The previous paragraph articulates the precise sense in which $f$ preserves the mapping cone sequences that already exist in $\cA$.
\end{remark}

\begin{defn}
We say that a functor is {\em exact} if it preserves mapping cone sequences. 
\end{defn}

By Remark~\ref{remark. every functor is exact}, every functor between $A_\infty$-categories is exact; regardless, the adjective is worth having for use in proofs (and is commonly used in higher algebra, especially in contexts where not every functor is exact).

\section{Quotients}

\begin{definition}[Quotients]\label{defn. Aoo quotient}
Fix an $A_\infty$-category $\cA$.
Let $\cB \subset \cA$ be a full subcategory, and let $0$ be the zero category (having a single object and only the 0 morphism). Then the {\em quotient} $A_\infty$-category $\cA / \cB$ is defined to be the pushout 
in $\Ainftycat$:
	\eqn\label{eqn. pushout diagram}
	\xymatrix{
	\cB \ar[r] \ar[d] & \cA \ar[d] \\
	0 \ar[r] & \cA/\cB
	}
	\eqnd
\end{definition}

Quotients always exist because $\Ainftycat$ is presentable (and in particular has all colimits, including pushouts). See Remark~3.25 of~\cite{tanaka-Aoo-units}.

The following is a generalization of Definition~6.4 of~\cite{lyubashenko-ovsienko}:

\begin{defn}
Let $f: \cA \to \cD$ be a functor and $i: \cB \to \cA$ a functor. If, for every $X,Y \in \ob \cB$, the chain map $\hom_{\cA}(X,Y) \to \hom_{\cD}(fX,fY)$ is null-homotopic, we will say that $f$ is {\em contractible along $i$.}
When $i$ is the inclusion of a full subcategory, we say $f$ is contractible along $\cB$.
\end{defn}

\begin{remark}
\label{remark. contractible is property of objects}
When all $A_\infty$-categories and functors are unital (as we assume throughout this work), the notion of $f$ being contractible along $i$ has a simple characterization articulable at the level of objects. If $f$ is  contractible along $i$, for every object $Z$ in the image of $fi$, the null homotopies exhibit a homotopy between a unit $e_Z$ of $Z$ and $0 \in \hom_{\cD}(Z,Z)$. On the other hand, suppose that $f$ is a functor for which every object of $\cB$ is sent under $fi$ to an object $Z$ whose unit is cohomologous to zero. Then (because $m^2(e_Z,-)$ is homotopic to the identity) we see that any chain map to $\hom_{\cD}(X,Z)$ is null-homotopic (for any object $X \in \cD$). In particular, $f$ is automatically contractible along $i$.

In short: If $f$ and $i$ are unital, $f$ is contractible along $i$ if and only if the image of $fi$ consists of objects whose units are null-cohomologous.
\end{remark}

We first record that being contractible along $i$ is a property that manifests in $\Ainftycat$, and can be generalized to arbitrary edges in $\Ainftycat$.

\begin{prop}
\label{prop. factoring through zero is a property}
Fix an arbitrary edge $i: \cB \to \cA$ in $\Ainftycat$. Then:
\enum[(a)]
\item for any edge $f: \cA \to \cD$ in $\Ainftycat$, the space of diagrams $\Delta^1 \times \Delta^1 \to \Ainftycat$ of the form
	\eqn\label{eqn. quotient diagram}
	\xymatrix{
	\cB \ar[r]^{i} \ar[d] & \cA \ar[d]^f \\
	0 \ar[r] & \cD
	}
	\eqnd
is either empty or contractible.
\item\label{item. factoring through zero is contractibility}
The space of such diagrams is non-empty if and only if, for any cofibrant replacement $i' : \cB' \to \cA'$ (in the sense of Remark~\ref{remark. abundance of cofibrancy}), any map $f': \cA' \to \cD$ obtained by filling the inner horn $\cA' \to \cA \xrightarrow{f} \cD$ is contractible along $i'$.
\item\label{item. classical contractible implies oo cat contractibility} Suppose that both $i$ and $f$ are in the image of the functor $\Ainftycatt \to \Ainftycat$ (so that $i$ and $f$ can be interpreted as honest functors of $A_\infty$-categories -- see Remark~\ref{remark. catt to cat}). If $f$ is contractible along $i$, then the space of diagrams of the form~\eqref{eqn. quotient diagram} is non-empty.
\enumd
\end{prop}

\begin{proof}
Concretely, the space of diagrams~\eqref{eqn. quotient diagram} is the fiber of the forgetful map
	\eqnn
	\fun(\Delta^1 \times \Delta^1, \Ainftycat)^{\sim}
	\to
	\fun(\Delta^1 \bigcup \Lambda^2_1, \Ainftycat)^{\sim}
	\eqnd
above the horn given by $\cB \xrightarrow{i} \cA \xrightarrow{f} \cD$ and the edge given by the image of the unique map $\cB \to 0$ in $\Ainftycatt$ (see Remark~\ref{remark. catt to cat}). This fiber is homotopy equivalent to the space of tuples $(z,T_1,T_2)$ where
	\begin{itemize}
	\item $z$ is an edge from $0$ to $\cD$.
	\item $T_1$ is a 2-simplex with $\partial_2$ and $\partial_0$ given by (the image of the unique functor) $\cB \to 0$ and $z$, respectively.
	\item $T_2$ is a 2-simplex with inner horn given by $\cB \xrightarrow{i} \cA \xrightarrow{f} \cD$, and
	\item $\partial_1 T_1 = \partial_1 T_2$.
\end{itemize}
Given an inner horn, there exists a unique (up to contractible choice) 2-simplex filling the horn (Corollary 2.3.2.2 of~\cite{htt}). Thus, let us fix a choice of $T_2$. Then the space of tuples $(z,T_1,T_2)$ is homotopy equivalent to the space of pairs $(z,T_1)$, with $T_1$ satisfying the boundary conditions above. On the other hand,
because $0$ is cofibrant, the space of choices of $z$ can be interpreted as the space of functors from $0$ to $\cD$.
This space can be computed as the nerve of the $A_\infty$-category of $A_\infty$-functors from $0$ to $\cD$, and this nerve is particular easy to characterize:
The nerve has a vertex for every object of $\cD$ for whom $0$ is a unit, and a unique $k$-simplex for any ordered $(k+1)$-tuple of such objects, with the obvious face and degeneracy maps. In particular, it is contractible. Thus, the space of pairs $(z,T_1)$ fibers over the contractible space of $z$. We conclude that the space of pairs $(z,T_1)$ is (fixing a $z$) homotopy equivalent to the space of $T_1$ satisfying the following boundary conditions: Its inner horn is $\cB \to 0 \xrightarrow{z} \cD$, and $\partial_2 T_1 = f|_{\cB}$.
(Here, we have denoted $\partial_2 T_2$ by the suggestive notation $f|_{\cB}$.)
 
 By the usual $\infty$-categorical yoga, let us fix an arbitrary 2-simplex $T$ filling the inner horn $\cB \to 0 \xrightarrow{z} \cD$, and suggestively denote $\partial_2 T = z|_{\cB}$. Then the space of $T_1$ (with the above boundary conditions) is homotopy equivalent to the space of homotopies $h$ from $z|_{\cB}$ to $f|_{\cB}$. Under our assumption that there is already one such path, we are thus left to compute the based loop space of $\hom_{\Ainftycat}(\cB,\cD)$, based at a fixed $z|_{\cB}$. 

Now we finally invoke that we are working in the $\infty$-category $\Ainftycat$. 
If $\cB' \to \cB$ is a quasi-equivalence from a cofibrant $\cB'$, then we have a homotopy equivalence $\hom_{\Ainftycat}(\cB,\cD) \simeq \hom_{\Ainftycat}(\cB',\cD)$ -- so we may as well assume $\cB$ cofibrant.
We may compute the homotopy groups $\pi_{i+1}, i \geq 0$ of $\hom_{\Ainftycat}(\cB,\cD)$ based at $z|_{\cB}$ by computing the cohomology of the complex of natural transformations:
	\eqnn
	H^{-i}\hom_{\fun_{A_\infty}(\cB,\cD)}(z|_\cB,z|_\cB).
	\eqnd
(See Corollary~1.9 of~\cite{tanaka-Aoo-units}.) Parsing the definitions, we may replace $\cD$ by its full subcategory $\cD_{null}$ of objects whose units are null in cohomology. Then the functor $\cD_{null} \to 0$ is a quasi-equivalence. So $\fun_{A_\infty}(\cB,0)$ is quasi-equivalent to $\fun_{A_\infty}(\cB,\cD_{null})$. In particular, the above cohomology groups all vanish, showing that the homotopy groups of $\hom_{\Ainftycat}(\cB,\cD)$ based at $z|_{\cB}$ vanish. 

Let us now prove~\eqref{item. factoring through zero is contractibility}. Cofibrant replacement of $i: \cA \to \cB$, in the sense of Remark~\ref{remark. abundance of cofibrancy}, leaves the space of \eqref{eqn. quotient diagram} unchanged up to homotopy equivalence. Henceforth we will thus assume $\cA,\cB$ to be cofibrant. 

If the space of~\eqref{eqn. quotient diagram} is non-empty, choose a functor $z: 0 \to \cD$ and a homotopy $h$ from $f|_{\cB}$ to $z|_{\cB}$.
Because $\cB$ is cofibrant, this can be represented by a degree -1 cochain in the complex of natural transformations from  $f|_{\cB}$ to $z|_{\cB}$ (Theorem~1.5(b) of~\cite{tanaka-Aoo-units}). Such a cochain in particular exhibits, for every $X,Y \in \ob \cB$, 
\begin{itemize}
\item Degree 0 closed elements $\eta_X \in \hom_{\cD}(fX,zX)$ and $\eta_Y \in \hom_{\cD}(fY,zY)$, both of which are  isomorphisms in $H^* \cD$, and
\item A chain homotopy making the following diagram of cochain complexes commute:
	\eqnn
	\xymatrix{
	\hom_{\cB}(X,Y) \ar[r]^{f} \ar[d]^{z}
		& \hom_{\cD}(fX,fY) \\
	\hom_{\cD}(zX,zY) \ar[ur]_{(\eta_X)^\ast(\eta_Y^{-1})_\ast}
	}.
	\eqnd
\end{itemize}
Because $z$, at the level of morphism chain complexes, sends all elements to zero, the above homotopy-commuting triangle shows that $f$ is null-homotopic on all mapping complexes.

For the converse, let $\cD_f \subset \cD$ be the full subcategory spanned by the image of $f|_{\cB}$. (We continue to safely assume $\cA,\cB$ cofibrant -- after possibly replacing them -- so that the edge $f$ in $\Ainftycat$ has an interpretation as an honest $A_\infty$-functor.) We then have a diagram $\Delta^1 \times \Delta^1 \to \Ainftycat$ which we draw as
	\eqnn
	\xymatrix{
	\cB \ar[r]^{i} \ar[d] & \cA \ar[d]^f \\
	\cD_f \ar[r]^{\subset} & \cD.
	}
	\eqnd
Further, any functor $0 \to \cD_f$ is a quasi-equivalence, because all hom complexes $\hom_{\cD_f}(-,-)$ are contractible (and, as a result, all objects in $\cD_f$ are isomorphic in $H^0\cD_f$) by hypothesis. Thus, one finds a diagram $\Delta^2 \bigcup (\Delta^1 \times \Delta^1) \to \Ainftycat$ of the form
	\eqnn
	\xymatrix{
	&\cB \ar[r]^{\subset} \ar[d] \ar@{-->}[dl]_{\exists}& \cA \ar[d]^f \\
	0 \ar[r]^{\simeq} &\cD_f \ar[r]^{\subset} & \cD.
	}
	\eqnd
(In classical language, one should interpret the triangle as commuting up to homotopy.) It is now straightforward to exhibit a diagram~\eqref{eqn. quotient diagram} -- say, by filling horns appropriately.

Now we prove~\eqref{item. classical contractible implies oo cat contractibility}.
Choose an object $X \in \ob \cB$ and define $z: 0 \to \cD$ to be the unique functor sending the object of 0 to $fiX$. Now recall the following (somewhat confusing) fact: Any (possibly non-unital) $A_\infty$-functor $\phi: \cX \to \cY$ admits a zero natural transformation $T$ from any ``zero functor'' -- i.e., any functor $\psi_Y: \cX \to \cY$ that sends every $X \in \ob \cX$ to some fixed object $Y \in \ob \cY$ and sends every element of $\hom_{\cX}$ to zero (and whose higher arity terms are all zero). The zero natural transformation $T$ likewise is zero at every arity. If we further assume that $\phi$ is a (unital) functor that satisfies the property that the hom chain maps are all null-homotopic, Proposition~7.15 of~\cite{lyubashenko-category-of} or guarantees that the zero natural transformation $T$ is in fact a natural equivalence. 

In particular, $fi$ is naturally equivalent to the composition $\cB \to 0 \xrightarrow{z} \cD$. Now choosing a cofibrant replacement $q: \cB' \to \cB$, the pullback map
	\eqnn
	q^* \fun_{A_\infty}(\cB,\cD) \to 
	\fun_{A_\infty}(\cB',\cD)
	\eqnnd
induces a chain map on the complex of (pre)natural transformations; in particular,  $\cB' \to \cB \to 0 \xrightarrow{z} \cD$ is naturally equivalent to $\cB' \to \cB \to \cA \to \cD$. Because $\cB'$ is cofibrant, this exhibits a homotopy in $\Ainftycat$, and in particular a diagram
	\eqnn
	\xymatrix{
	\cB' \ar[r] \ar[d] & \cA \ar[d]\\
	0 \ar[r] & \cD
	}
	\eqnnd
which, by transferring along the equivalence $q$, exhibits a diagram~\eqref{eqn. quotient diagram}.
\end{proof}

\begin{proposition}\label{prop. pushout diagram is automatic for quotients}
Let $\cA$ be an $A_\infty$-category and fix $\cB \subset \cA$ a full subcategory. Fix further an $A_\infty$-category $\cQ$ along with a functor $\cA \to \cQ$. Then the following are equivalent:
\enum[i.]
\item\label{item. full subcat send to zero}
For any $A_\infty$-category $\cD$, and for any cofibrant replacement $\cA' \xrightarrow{\simeq} \cA$, the restriction map $\hom_{\Ainftycat}(\cQ, \cD) \to \hom_{\Ainftycat}(\cA', \cD)$ (induced by restriction along  $\cA' \to \cA \to \cQ$) is an injection on $\pi_0$ and a homotopy equivalence along each component. Further, the restriction map has essential image spanned by those functors that are contractible along $\cB$.

\item\label{item. Q completes to pushout} One can complete $\cA \to \cQ$ to a pushout diagram in $\Ainftycat$
	\eqnn
	\xymatrix{
	\cB \ar[r] \ar[d] & \cA \ar[d] \\
	0 \ar[r] & \cQ
	}
	\eqnd
where the top horizontal arrow is the inclusion $\cB \subset \cA$ and the bottom horizontal arrow is to a zero object of $\cQ$.
\enumd	
\end{proposition}

\begin{remark}
\label{remark. cofibrancy identification of functors}
The description of the image in~\eqref{item. full subcat send to zero} only makes sense if we have an interpretation of $\hom_{\Ainftycat}(\cA,\cD)$ as having vertices given by $A_\infty$-functors $\cA \to \cD$. Such an interpretation is only available a priori if $\cA$ is cofibrant.
See Remark~1.6 and Theorem~3.37 of~\cite{tanaka-Aoo-units}. Note also we need only characterize the image of the restriction map, so there is no need for $\cQ$ to be cofibrant to state~\eqref{item. full subcat send to zero}.
\end{remark}

\begin{proof}[Proof of Proposition~\ref{prop. pushout diagram is automatic for quotients}.]
As usual we assume $\cA$ is cofibrant (replacing if necessary). 
By Proposition~\ref{prop. factoring through zero is a property},
in either of the cases
\ref{item. full subcat send to zero}. or \ref{item. Q completes to pushout}. of the proposition,
we can choose a diagram in $\Ainftycat$ exhibiting a homotopy between $\cB \to 0 \to \cQ$ and $\cB \to \cA \to \cQ$. Concretely, this is a simplicial set map 
	\eqn\label{eqn. J base square} 
	J: \Delta^1 \times \Delta^1 \to \Ainftycat.
	\eqnd
When proving either implication of the proposition, we fix a choice of $J$. For the time being, let us also fix a choice of $A_\infty$-category $\cD$.

We have the following commutative diagram of simplicial sets:
\eqn\label{eqn. big diagram}
\begin{tikzpicture}[scale=1.2]
	\node (S1) at (0,0)
		{
    	$\left\{ 
			\cQ \to \cD
    	\right\}$
		}
	;
	\node (S2) at (4,0) 
		{
    	$\left\{ 
    		\begin{tikzpicture}[scale=1.5, baseline={([yshift=-.8ex]current bounding box)}]
                \node (A) at (1,1) {$\cA$};
                \node (Q) at (1,0) {$\cQ$};
                \node (D) at (2,-1) {$\cD$};
                \path[->,font=\scriptsize,tips]
                    (A) edge node[left]{$\bullet$} (Q)
                    (A) edge (D)
                    (Q) edge (D)
                    ;
                \end{tikzpicture}
    	\right\}$
		}
	;
	\node (S3) at (8,0) 
		{		
			$\left\{
    		\begin{tikzpicture}[scale=1.5, baseline={([yshift=-.8ex]current bounding box)}]
                \node (A) at (1,1) {$\cA$};
                \node (D) at (2,-1) {$\cD$};
                \path[->,font=\scriptsize,tips]
                    (A) edge (D)
                    ;
                \end{tikzpicture}
             \right\}$
		}
	;
	\node (R2) at (4,4) 
		{
			$\left\{
    		\begin{tikzpicture}[scale=1.5, baseline={([yshift=-.8ex]current bounding box)}]
                \node (B) at (0,1) {$\cB$};
                \node (A) at (1,1) {$\cA$};
                \node (O) at (0,0) {$0$};
                \node (Q) at (1,0) {$\cQ$};
                \node (D) at (2,-1) {$\cD$};
                \path[->,font=\scriptsize,tips]
                    (B) edge node[above]{$\bullet$} (A)
                    (B) edge node[left]{$\bullet$} 	(O)
                    (O) edge node[above]{$\bullet$} (Q)
                    (A) edge node[left]{$\bullet$} (Q)
                    (A) edge (D)
                    (Q) edge (D)
                    (O) edge (D);
                \end{tikzpicture}
             \right\}$
		}
	;
	\node (R3) at (8,4) 
		{		
			$\left\{
    		\begin{tikzpicture}[scale=1.5, baseline={([yshift=-.8ex]current bounding box)}]
                \node (B) at (0,1) {$\cB$};
                \node (A) at (1,1) {$\cA$};
                \node (O) at (0,0) {$0$};
                \node (D) at (2,-1) {$\cD$};
                \path[->,font=\scriptsize,tips]
                    (B) edge node[above]{$\bullet$} (A)
                    (B) edge node[left]{$\bullet$} 	(O)
                    (A) edge (D)
                    (O) edge (D);
                \end{tikzpicture}
             \right\}$
		}
	;
    \path[->,font=\scriptsize,tips]
    (R2) edge node[left]{$\sim$}  node[right]{\eqref{item. XU and XV equivalences}1}
    	(S1)
    (R2) edge node[above]{\eqref{item. Q completes to pushout}} 
    	(R3)
    (R2) edge node[right]{$\sim$} node[left]{\eqref{item. XU and XV equivalences}2}
    	(S2)
    (R3) edge node[right]{f.f.} node[left]{\eqref{eqn. fully faithful YW}}
    	(S3)
    (S2) edge node[above]{$\sim$} node[below]{\eqref{item. VU}} 
    	(S1)
    (S2) edge node[above]{\eqref{item. full subcat send to zero}} 
    	(S3);
\end{tikzpicture}
\eqnd
To explain the ingredients, and to reduce clutter, let us give each simplicial set in the diagram a name:
\eqn\label{eqn.smaller diagram}
\begin{tikzpicture}[scale=1.5]
	\node (S1) at (0,0) {$U$};
	\node (S2) at (2,0) {$V$};
	\node (S3) at (4,0) {$W$};
	\node (R2) at (2,2) {$X$};
	\node (R3) at (4,2) {$Y$};
    \path[->,font=\scriptsize,tips]
    (R2) edge node[left]{$\sim$}  node[right]{\eqref{item. XU and XV equivalences}1}
    	(S1)
    (R2) edge node[above]{\eqref{item. Q completes to pushout}} 
    	(R3)
    (R2) edge node[right]{$\sim$} node[left]{\eqref{item. XU and XV equivalences}2}
    	(S2)
    (R3) edge node[right]{f.f.}  node[left]{\eqref{eqn. fully faithful YW}}
    	(S3)
    (S2) edge node[above]{$\sim$} node[below]{\eqref{item. VU}} 
    	(S1)
    (S2) edge node[above]{\eqref{item. full subcat send to zero}} 
    	(S3);
\end{tikzpicture}
\eqnd
Then
\begin{enumerate}[(A)]
	\item $U$ is the simplicial set of all simplicial set maps $\Delta^1 \to \Ainftycat$ such that the vertices of $\Delta^1$ are sent to $\cQ$ and $\cD$, respectively. So for example, the $k$-simplices of $U$ are given by the collection of maps $f: \Delta^k \times \Delta^1 \to \Ainftycat$ for which $f|_{\Delta^k \times d_1 \Delta^1}$ is the degenerate $k$-simplex at $\cQ$, and $f|_{\Delta^k \times d_0 \Delta^1}$ is the degenerate $k$-simplex at $\cD$. This explains why we have used the informal notation $\{\cQ \to \cD\}$ to denote $U$ in \eqref{eqn. big diagram}. ($U$ is the simplicial set constructed out of ``the collection of all edges from $\cQ$ to $\cD$''.) 
	\item  $V$ is the simplicial set of all simplicial set maps $\Delta^2 \to \Ainftycat$ such that the edge $d^2 \Delta^2$ is sent to the map $\cA \to \cQ$ fixed in the hypotheses. The $k$-simplices of $V$ are given by simplicial set maps $f: \Delta^k \times \Delta^2 \to \Ainftycat$ satisfying the natural degeneracy conditions along $f|_{\Delta^k \times \Delta^0}$ (to fix $\cD$) and along $f|_{\Delta^k \times d^2 \Delta^2}$. In \eqref{eqn. big diagram}, the symbol $\bullet$ along the edge $\cA \to \cQ$ above is meant to indicate that $d_2 \Delta^2$ moves in degenerate families.
	\item\label{item. VU} Note that there is a natural ``forgetful'' map of simplicial sets from $V$ to $U$ by restricting from $\Delta^2$ to $d^0 \Delta^2$.  Because the inclusion of $d^0 \Delta^2 \cup d^2 \Delta^2 \into \Delta^2$ is inner anodyne, and because $\Ainftycat$ is an $\infty$-category, this forgetful map is a trivial inner fibration. Thus the forgetful map (as indicated in \eqref{eqn.smaller diagram}) is a homotopy equivalence.
	\item $W$ is the simplicial set of all edges from $\cA$ to $\cD$. The map of simplicial sets indicated by \eqref{item. full subcat send to zero} is the forgetful map. 
	\item $X$ is as follows. Let $\Delta^3 \cup_{d^1 \Delta^3} \Delta^3$ denote the simplicial set given by gluing two 3-simplices along the 1st face, $d^1 \Delta^3$, of each 3-simplex. Then $X$ is the simplicial set  of maps $f: \Delta^3 \cup_{d^1 \Delta^3} \Delta^3 \to \Ainftycat$ such that the restriction of $f$ to $\Delta^1 \times \Delta^1 \subset \Delta^3 \cup_{d^1 \Delta^3} \Delta^3$ is equivalent to the map $J$ from~\eqref{eqn. J base square}. As before, there are natural degeneracy conditions to articulate what we mean by a $k$-simplex of $X$, and the $\bullet$ symbols in \eqref{eqn. big diagram} are meant to indicate that the value of $f$ is fixed along $J$. 
	\item\label{item. XU and XV equivalences} $X$ thus has a forgetful map \eqref{item. XU and XV equivalences}1 to $U$, and a forgetful map \eqref{item. XU and XV equivalences}2 to $V$. Both forgetful maps are homotopy equivalences again by an inner anodyne argument.
	\item\label{eqn. fully faithful YW} Finally, $Y$ is the simplicial set of maps $\Delta^1 \times \Delta^1 \to \Ainftycat$ such that the two initial edges of $\Delta^1 \times \Delta^1$ are sent to the maps $\cB \to \cA$ and $\cB \to 0$, while the terminal vertex of $\Delta^1 \times \Delta^1$ is sent to $\cD$.  The forgetful map to $W$ is an injection on $\pi_0$ and an equivalence by Proposition~\ref{prop. factoring through zero is a property}. (The``f.f.'' in the diagrams stands for fully faithful.) Importantly, the image of $Y \to W$ is the space of functors characterized in~\ref{item. full subcat send to zero}., again by Proposition~\ref{prop. factoring through zero is a property}.
\end{enumerate}

Importantly, all the arrows above are natural in the $\cD$ variable. This follows from standard constructions: All five simplicial sets $U, V, W, X, Y$ are the fibers (over $\cD$) of a coCartesian fibration over $\Ainftycat$. 

Suppose \ref{item. full subcat send to zero}. holds. We note that choosing an inverse to the arrow \eqref{item. VU} in the diagram, then composing with the arrow labeled \eqref{item. full subcat send to zero}, is a model for the restriction map $\hom_{\Ainftycat}(\cQ,\cD) \to \hom_{\Ainftycat}(\cA,\cD)$. Thus assumption \ref{item. full subcat send to zero}. implies that the arrow \eqref{item. full subcat send to zero} is a fully faithful inclusion (i.e., an injection on $\pi_0$ and a homotopy equivalence on each connected component). Thus, the arrow labeled \eqref{item. Q completes to pushout} is a homotopy equivalence on each connected component because all the other arrows in the commutative diagram are. By the same reasoning, this arrow is an injection on $\pi_0$. Moreover, we claim the arrow labeled \eqref{item. Q completes to pushout}  is a surjection on $\pi_0$. To see this, suppose $f$ is a vertex of $Y$ and let $f': \cA \to \cD$ be its image in $W$. By hypothesis \ref{item. full subcat send to zero}., we know that the image of the arrow \eqref{item. full subcat send to zero} consists precisely of such $f'$. This completes the proof that \eqref{item. Q completes to pushout} is a surjection on $\pi_0$.

Because \eqref{item. Q completes to pushout} is a bijection on $\pi_0$ and a homotopy equivalence on connected components, it is a homotopy equivalence. This exhibits $J$ as a pushout diagram, hence \ref{item. Q completes to pushout}. is proven.
 
Now suppose \ref{item. Q completes to pushout}. then the arrow \eqref{item. Q completes to pushout} is a homotopy equivalence. In particular (by the commutativity of the diagram) the arrow \eqref{item. full subcat send to zero} is an injection on $\pi_0$ and a homotopy equivalence on each component. It remains to identify the image of \eqref{item. full subcat send to zero} on $\pi_0$ as precisely those maps $\cA \to \cD$ that are null-homotopic once pre-composed with $\cB \to \cA$. But this is precisely the image of the righthand vertical map \eqref{eqn. fully faithful YW}, again by Proposition~\ref{prop. factoring through zero is a property}. Thus \ref{item. full subcat send to zero}. holds.

\end{proof}

One can also compute pushouts~\eqref{eqn. pushout diagram} when $\cB \to \cA$ is not a fully faithful inclusion. 

\begin{prop}\label{prop. quotient is full}
If $i: \cB \to \cA$ is a functor of $A_\infty$-categories, the pushout along $\cB \to 0$ is a quotient of $\cA$ along the full subcategory spanned by the essential image of $i$.
\end{prop}

\begin{proof}
Let $\cB^\#$ denote the full subcategory of $\cA$ spanned by the essential image of $i$. Then for any functor $f: \cA \to \cD$, pullback along $\cB \to \cB^\#$ induces a map $\alpha_f$ from the space of diagrams of the form
	\eqnn
	\xymatrix{
	\cB^\# \ar[r]^{i} \ar[d] & \cA \ar[d]^f \\
	0 \ar[r] & \cD
	}
	\eqnnd
to the space of diagrams~\eqref{eqn. quotient diagram}. But for a fixed $f$, these spaces are either empty or contractible by~\eqref{prop. factoring through zero is a property} -- and emptiness is detected purely by the essential image of $i$ (Remark~\ref{remark. contractible is property of objects}). That is, for every $f$, $\alpha_f$ is a homotopy equivalence. 

By definition, a quotient $\cQ$ of $\cB \to \cA$ corepresents diagrams of the form~\eqref{eqn. quotient diagram}.  (For example, the space of maps from $\cQ \to \cD$ is homotopy equivalent to the space of diagrams of the form~\eqref{eqn. quotient diagram} by the homotopy equivalence diagram~\eqref{eqn.smaller diagram}.) Letting $\cQ^\#$ denote the quotient of $\cB^\# \to \cA$, there is a natural map $\cQ \to \cQ^\#$, and the induced pullback of corepresenting functors has components given by $\alpha_f$. Because these components are all homotopy equivalences by the previous paragraph, the Yoneda Lemma tells us that $\cQ \to \cQ^\#$ is an equivalence.
\end{proof}

\subsection{Models for quotients}

\begin{notation}[$\sD$ and $\sq$]
Fix an $A_\infty$-category $\cA$ and a full subcategory $\cB$.
There are various constructions of quotients in the literature, and we hone in on two. The first we will denote by $\sD(\cA|\cB)$ following Lyubashenko-Ovsienko~\cite{lyubashenko-ovsienko}, and the second by $\sq(\cA|\cB)$ following Lyubashenko-Manzyuk~\cite{lyubashenko-manzyuk}. We do not recall their full definitions here and refer the reader to the papers just cited. 
\end{notation}

\begin{remark}
The localization used in~\cite{gps-covariant} is modeled on $\sD$. One nice feature of $\sD$ is that morphism complexes of a localization are easily shown to be sequential colimits (Lemma~\ref{lemma. local sequences localize}). 
Regardless, the models $\sD$ and $\sq$ are not necessary to prove many properties one needs of, say, the wrapped Fukaya category (even for applications in \cite{oh-tanaka-actions}). For example, $\kk$-linear versions of arguments used in ~I.3 of~\cite{nikolaus-scholze} suffice to prove Lemma~\ref{lemma. local sequences localize}. 
 
\end{remark}

\begin{remark}
In principle, the equivalence $\Ainftycat \simeq \dgcat$ means we can model quotients of $A_\infty$-categories using their Yoneda embeddings, then resorting to a dg quotient. For example, the construction of Drinfeld~\cite{drinfeld-dg-quotients} together with the universal property verified by Tabuada~\cite{tabuada-dg-quotients} means we may construct quotients by considering the image $\cA \subset \cA\Mod$, finding an appropriate replacement of this image, and constructing a quotient. But having multiple models is useful. 

The model of Lyubashenko-Ovisienko~\cite{lyubashenko-ovsienko} is particularly useful when the base commutative ring $\kk$ is a field, but it is not straightforward to prove its universal property directly. For example, an inconvenience presents itself when trying to carry out the previous paragraph, which is that the Yoneda embedding of a homotopically flat $A_\infty$-category need not itself be a homotopically flat dg-category. One issue is that arbitrary direct products of homotopically projective complexes need not be homotopically projective.
\end{remark}

\begin{theorem}[Theorem 1.3 of \cite{lyubashenko-manzyuk}]\label{theorem. lyubashenko-manzyuk universal property}
For any $A_\infty$-category $\cA$ and any full subcategory $\cB \subset \cA$, there is a functor $q: \cA \to \sq(\cA|\cB)$ satisfying the following property: For any $A_\infty$-category $\cD$, restriction along $q$ induces a fully faithful embedding of $A_\infty$-categories
	\eqnn
	\fun_{A_\infty}(\sq(\cA|\cB),\cD)
	\xra{q^*}
	\fun_{A_\infty}(\cA, \cD)
	\eqnd
whose essential image consists of functors $f$ that are contractible along $\cB$. Even better, if one treats $q^*$ as having codomain given by this essential image, $q^*$ admits an inverse up to natural equivalence of functors.
\end{theorem}

\begin{theorem}[Theorem 7.4 of \cite{lyubashenko-manzyuk}]\label{theorem. lyubashenko-manzyuk different quotients agree}
There is a functor $\cA \to \sD(\cA|\cB)$ which is contractible along $\cB$. The functor $\sq(\cA|\cB) \to \sD(\cA|\cB)$ induced by Theorem~\ref{theorem. lyubashenko-manzyuk universal property} admits an inverse up to natural equivalence of functors.
\end{theorem}

\begin{theorem}\label{theorem. A oo quotient}
When $\cA$ is cofibrant and $\cB \subset \cA$ is a full subcategory, the maps $\cA \to \sD(\cA|\cB)$ and  $\cA \to \sq(\cA|\cB)$ exhibit $\sD(\cA|\cB)$ and $\sq(\cA|\cB)$ as quotients of $\cA$ along $\cB$ in the sense of Definition~\ref{defn. Aoo quotient}.
\end{theorem}

\begin{proof}
By Theorem~\ref{theorem. lyubashenko-manzyuk different quotients agree}, we need only prove the result for the map $\cA \to \sq(\cA|\cB)$.
We have an induced map
	\eqnn
	\hom_{\Ainftycat}(\sq(\cA|\cB), - ) \to \hom_{\Ainftycat}(\cA, -)
	\eqnd
which, on each test object $\cD$, is induced by the restriction functor from Theorem~\ref{theorem. lyubashenko-manzyuk universal property}. By (taking the nerve and underlying Kan complexes of the functors $A_\infty$-categories in) that theorem, this identifies $\hom_{\Ainftycat}(\sq(\cA|\cB),\cD)$ with the space of those functors $\cA \to \cD$ that are contractible along $\cB$. By Proposition~\ref{prop. pushout diagram is automatic for quotients}, this proves that the map $\cA \to \sq(\cA|\cB)$ exhibits $\sq(\cA|\cB)$ as a quotient.
\end{proof}

\section{Localizations of $A_\infty$-categories}\label{section. Aoo-localization}

In~\cite{gps-covariant}, given an $A_\infty$-category $\cA$ with morphisms satisfying a cofibrancy condition, and given a collection of morphisms $W \subset \cA$, a new $A_\infty$-category is constructed which we will denote by $\cA[W^{-1}]$. This $A_\infty$-category is called a localization in ibid. The goal of this section is to prove the universal property of $\cA[W^{-1}]$ to justify this nomenclature.

\subsection{Localizations of $A_\infty$-categories}

\begin{prop}
\label{prop. three localizations}
Let $\cA$ and $\cL$ be cofibrant $A_\infty$-categories, and let $\iota: \cA \to \cL$ be a functor. We fix also a collection of connected components $W \subset \hom_{\sset}(\Delta^1,N\cA)$. (Put another way, $W$ is a subset of morphisms in $H^0 \cA$.)

The following are equivalent:
\enum[(a)]
	\item\label{item. localization hom space} For every $A_\infty$-category $\cE$, the pullback map of spaces
		\eqnn
		\iota^*: \hom_{\Ainftycat}(\cL,\cE)
		\to \hom_{\Ainftycat}(\cA,\cE)
		\eqnnd
	is fully faithful (meaning it is an injection on $\pi_0$ and a homotopy equivalence along connected components) and the image in $\pi_0$ consists of those functors $f: \cA \to \cE$ for which  $\alpha \in W$ implies $f(\alpha)$ is an equivalence in $\cE$.
	\item\label{item. localization hom cat} For every $A_\infty$-category $\cE$, the pullback map of $\infty$-categories
		\eqnn
		\iota^*:  N\underline{\hom}_{\Ainftycat}(\cL,\cE)
		\to N\underline{\hom}_{\Ainftycat}(\cA,\cE)
		\eqnnd
	is fully faithful and the essential image consists of those functors $f: \cA \to \cE$ for which  $\alpha \in W$ implies $f(\alpha)$ is an equivalence in $\cE$.
	\item\label{item. localization hom enriched} For every $A_\infty$-category $\cE$, the pullback map of $A_\infty$-categories
		\eqn\label{eqn. l star for enriched}
		\iota^*: \underline{\hom}_{\Ainftycat}(\cL,\cE)
		\to \underline{\hom}_{\Ainftycat}(\cA,\cE)
		\eqnd
	is fully faithful and the essential image consists of those functors $f: \cA \to \cE$ for which  $\alpha \in W$ implies $f(\alpha)$ is an equivalence in $\cE$.
\enumd
\end{prop}.  

\begin{proof}

The implications
$\eqref{item. localization hom enriched} \implies
\eqref{item. localization hom cat} \implies
\eqref{item. localization hom space}$ 
are obvious, as both $N$ and the underlying $\infty$-groupoid functors preserve fully faithful maps.

$
\eqref{item. localization hom space}
\implies
\eqref{item. localization hom cat}
$.
The hypothesis is true for all $\cE$, and in particular for $\fun_{A_\infty}(\kk[\Delta^n],\cE)$ where $\kk[\Delta^n]$ is the free $\kk$-linear dg category generated by the $n$-simplex. We have
	\begin{align}
	\hom_{\Ainftycat}(\cA,\fun_{A_\infty}(\kk[\Delta^n],\cE))
		&\simeq \hom_{\Ainftycat}(\cA \tensor_{\Ainftycat} \kk[\Delta^n], \cE) \nonumber \\
		&\simeq \hom_{\Ainftycat}(\kk[\Delta^n], \fun_{A_\infty}(\cA,\cE) )\nonumber \\
		&\simeq \hom_{\inftyCat}(\Delta^n, N\fun_{A_\infty}(\cA,\cE)) \nonumber 
	\end{align}
where the first two equivalences are by using that $\Ainftycat$ is closed symmetric monoidal, and the last equivalence follows by adjunction. (The nerve is the right adjoint to the free $A_\infty$-category functor by Proposition~3.39 of~\cite{tanaka-Aoo-units}.) Because the above equivalences are natural in $n$, we conclude that there are fully faithful inclusions (i.e., inclusions of connected components)
	\eqnn
	\hom_{\inftyCat}(\Delta^n, N\fun_{A_\infty}(\cL,\cE))
	\to
	\hom_{\inftyCat}(\Delta^n, N\fun_{A_\infty}(\cA,\cE))
	\eqnnd
compatible with the simplicial maps in the $n$ variable. Thus, we have a levelwise inclusion of connected components in the map of simplicial spaces
	\eqnn
	\hom_{\inftyCat}(\Delta^\bullet, N\fun_{A_\infty}(\cL,\cE))
	\to
	\hom_{\inftyCat}(\Delta^\bullet, N\fun_{A_\infty}(\cA,\cE))
	\eqnnd
meaning this is then a fully faithful functor of $\infty$-categories, by considering these simplicial spaces as complete Segal spaces. 

$
\eqref{item. localization hom cat}
\implies
\eqref{item. localization hom enriched}
$. Let us first assume that $\cE$ is stable. Then the internal hom $\underline{\hom}_{\Ainftycat}(A,\cE)$ is stable. In particular, for every functor $h: A \to \cE$, and for every integer $k$, we know there exist natural isomorphisms
	\eqn\label{eqn. shift cohomology localization}
	H^k\hom_{\underline{\hom}_{\Ainftycat}(\cA,\cE)}(-,h)
	\cong
	H^{0}\hom_{\underline{\hom}_{\Ainftycat}(\cA,\cE)}(-,\Sigma^{k} h) 
	\eqnd
where $\Sigma^k$ is the $k$-fold shift. To show~\eqref{eqn. l star for enriched} is fully faithful, fix two functors $f,g : \cL \to \cE$. We must show that for every $k$,
	\eqnn
	H^k\hom_{\underline{\hom}_{\Ainftycat}(\cL,\cE)}(f,g)
	\cong
	H^k\hom_{\underline{\hom}_{\Ainftycat}(\cA,\cE)}(f\iota ,g\iota )
	\eqnnd
For non-positive values of $k$, the above isomorphism follows by hypothesis 
\eqref{item. localization hom cat}. On the other hand, for positive $k$, we have a commuting diagram
	\eqnn
	\xymatrix{
	H^k\hom_{\underline{\hom}_{\Ainftycat}(\cL,\cE)}(f,g)
		\ar[d]^{\iota^*}
		\ar[r]_-{\cong}^-{\eqref{eqn. shift cohomology localization}}
		&	
		H^0\hom_{\underline{\hom}_{\Ainftycat}(\cL,\cE)}(f,\Sigma^k g) \ar[d]^{\iota^*}_{\cong}
		\\
	H^k\hom_{\underline{\hom}_{\Ainftycat}(\cA,\cE)}(f\iota ,g\iota )
		\ar[r]_-{\cong}^-{\eqref{eqn. shift cohomology localization}}
		&
		H^0\hom_{\underline{\hom}_{\Ainftycat}(\cA,\cE)}(f\iota ,\Sigma^k (g\iota ) )	.
	}
	\eqnnd
The righthand vertical arrow is an isomorphism by hypothesis -- note we use that $\iota$ is exact (Remark~\ref{remark. every functor is exact}) to naturally identify $(\Sigma^k g )\iota$ and $\Sigma^k(g\iota )$. It follows that $\iota^*$ is fully faithful.

When $\cE$ is not stable, consider the fully faithful embedding to the triangulated closure $j: \cE \to \Tw \cE$ (Proposition~\ref{prop. Tw facts}\eqref{item. A to Tw A}). For any domain $A_\infty$-category, post-composition by $j$ also realizes a triangulated closure (Proposition~\ref{prop. Tw facts}\eqref{item. Tw of Fun is functors into Tw}):
	\eqnn
	\underline{\hom}_{\Ainftycat}(-,\cE)
	\to
	\underline{\hom}_{\Ainftycat}(-,\Tw \cE)
	\simeq
	\Tw \underline{\hom}_{\Ainftycat}(-,\cE).
	\eqnnd
Thus the above composition is fully faithful, and we can reduce to the case of stable $\cE$.	
\end{proof}

\begin{defn}
\label{defn. localization of Aoo cat}
Let $\cA$ be an $A_\infty$-category and fix $W \subset \hom_{\sset}(\Delta^1,N\cA)$ a collection of connected components of morphisms in $\cA$.

We say a functor of $A_\infty$-categories $\iota: \cA \to \cL$ is a {\em localization} of $\cA$ along $W$ if for any (and hence for all) commutative diagrams $\Delta^1 \times \Delta^1 \to \Ainftycat$ 
	\eqn\label{eqn. localization definition}
	\xymatrix{
	\cA' \ar[r]^{\iota'} \ar[d]
	& \cL ' \ar[d] \\
	\cA \ar[r]^{\iota}
	& \cL
	}
	\eqnd
where the vertical maps are isomorphisms in $\Ainftycat$ and $\cA',\cL'$ are cofibrant,
the map $\iota'$ satisfies any of the equivalent properties of Proposition~\ref{prop. three localizations}.
\end{defn}

\begin{remark}
Let $\cA$ be a cofibrant $A_\infty$-category, and let $W \subset H^0\hom_{\cA}$ be a collection of (cohomology classes of) morphisms in $\cA$. Suppose further that $\cL$ is cofibrant. Then $\iota: \cA \to \cL$ is a localization if and only if, for every $A_\infty$-category $\cE$, the map
	\eqnn
	\iota^*: \fun_{A_\infty}(\cL,\cE)
	\to
	\fun_{A_\infty}(\cA,\cE)
	\eqnnd
is fully faithful. One can, for example, take $\iota' = \iota$ and the vertical arrows to be the identity maps in~\eqref{eqn. localization definition}.
\end{remark}

\begin{remark}
The universal property of $\cA[W^{-1}]$ guarantees the following: If $f: \cA \to \cD$ is any functor of $A_\infty$-categories for which every $w \in W$ is sent to an equivalence in $\cD$, then $f$ factors through $\cA[W^{-1}]$ up to homotopy of functors. Let us explain how Definition~\ref{defn. localization of Aoo cat} guarantees this, for those who are not used to such definitions. The map denoted $\iota^*$ in that definition is, up to homotopy equivalence, just an inclusion of connected components---hitting exactly those connected components of $\hom_{\Ainftycat}(\cA,\cD)$ containing functors that send $W$ to equivalences. Thus, by writing down what it means to choose a homotopy inverse to $\iota^*$ from the image of $\iota^*$, we are supplied an extension $\tilde f: \cA[W^{-1}] \to \cD$, along with a homotopy from $\tilde f \circ \iota$ to $f$. This data is canonical, in that the space of homotopy inverses (together with homotopies exhibiting the homotopy inverse as a homotopy inverse) is a contractible space.
\end{remark}

\subsection{Localizations via quotients}

Now let us construct localizations from the existence of quotients.

\begin{remark}
Note that, in the setting of dg-categories, To\"en proceeds in the reverse direction;  To\"en constructs localizations, then concludes the existence of quotients~\cite{toen-homotopy-theory-of-dg-cats}.
\end{remark}

\begin{notation}[$\cB_W$]\label{notation. cB_W}
Let $\cA$ be an $\kk$-linear $A_\infty$-category and fix a collection $W \subset H^0(\hom_\cA)$. We let $\cB_W \subset \Tw\cA$ denote the full subcategory of those objects arising as mapping cones of elements of $W$.
\end{notation}

\begin{prop}
\label{prop. Bw contractible is localizing}
Let $F: \Tw \cA \to \cD$ be a functor. $F$ is contractible along $\cB_W$ if and only if it sends morphisms of $W$ to equivalences in $\cD$. 
\end{prop}

\begin{proof}
First recall that $F$ is automatically an exact functor by Remark~\ref{remark. every functor is exact}.
 
For one direction: Given $w: y \to z$ a closed degree 0 morphism (whose cohomology class is) in $W$, and for any object $x \in \ob \cD$, we have that
	\eqnn
	\cone(\hom_{\cD}(x,Fy) \xrightarrow{(Fw)_\ast}\hom_{\cD}(x,Fz))
	\simeq
	\hom_{\cD}(x,F\cone(w)).
	\eqnnd
We have used that $F$ is exact, and that $\hom(-,\cone(y \to z))$ is, as a chain complex, a mapping cone of $\hom(-,y) \to \hom(-,z)$ (see Example~2.7 of~\cite{tanaka-Aoo-units}). By the assumption that $F$ is contractible along $\cB_W$, we know that the identity chain map of the complex $\hom_{\cD}(F\cone(w),F\cone(w))$ is null-homotopic. In particular -- by post-composing with a unit of $F\cone(w)$, say -- $\hom_{\cD}(x,F\cone(w))$ is chain homotopy equivalent to zero. But (by standard homological algebra) a contracting homotopy for a mapping cone of a chain map exhibits an inverse-up-to-homotopy of the chain map. In particular, for all $x \in \ob \cD$, the chain map
	\eqnn
	\hom_{\cD}(x,Fy) \xrightarrow{(Fw)_\ast}\hom_{\cD}(x,Fz)
	\eqnnd
admits an inverse-up-to-homotopy chain map $\phi_x$. Plugging in $x = Fz$, and choosing a unit for $Fz$, one obtains a degree 0 element $\phi_{Fz}(e_{Fz})\in \hom_{\cD}(Fz,Fy)$. One easily checks this exhibits an equivalence $Fz \to Fy$ in $\cD$, with inverse up to homotopy given by $(Fw)_\ast e_{Fy}$ (this last notation takes $x = Fy$).  But $(Fw)_\ast e_{Fy}$ is $m^2(Fw, e_{Fy})$, which by definition of unit is homotopic to $Fw$. Thus, $Fw$ is an equivalence in $\cD$.

For the reverse direction: 
If $Fw$ admits an inverse up to homotopy in $\cD$, one obtains an inverse up to chain homotopy of the chain map
	\eqnn
	\hom_{\cD}(\cone(Fw),Fy) \xrightarrow{(Fw)_\ast}\hom_{\cD}(\cone(Fw),Fz)
	\eqnnd
and, as we have already mentioned, the mapping cone of this chain map is chain homotopy equivalent to $\hom_{\cD}(\cone(Fw),\cone(Fw)).$ Thus (again by homological algebra) the inverse-up-to-homotopy of the above chain map exhibits a contraction of the chain complex $\hom_{\cD}(\cone(Fw),\cone(Fw)).$ In particular, any unit $e$ for $\cone(Fw)$ is cohomologous to zero. We conclude that any two objects $x,y$ in $\cB_W$, the map $\hom_{\Tw \cA}(x,y) \to \hom_{\Tw \cD}(Fx,Fy)$ is null homotopic (say, by pre- or post-composing by a unit).
\end{proof}

\begin{definition}[$\cL_W$]\label{defn. Lw}
Now consider the quotient  $(\Tw \cA) / \cB_W$. The quotient receives a natural functor from $\cA$, given by the composite $\cA \to \Tw \cA \to (\Tw \cA) / \cB_W$.

We let $\cL_W \subset (\Tw \cA) / \cB_W$ be the full subcategory consisting of objects in the essential image of $\cA \to(\Tw \cA) / \cB_W$.
\end{definition}

Our goal is to prove the following:

\begin{theorem}\label{thm. localization of A oo cats}
When $\cA$ is cofibrant, the functor $\cA \to \cL_W$ exhibits $\cL_W$ as a localization of $\cA$ along $W$.
\end{theorem}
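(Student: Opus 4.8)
The plan is to verify the universal property in Definition~\ref{defn. localization of Aoo cat} by tracing through the three ingredients that assemble $\cL_W$: the stable closure $\Tw\cA$, the quotient $(\Tw\cA)/\cB_W$, and the passage to the full subcategory $\cL_W$ on the essential image of $\cA$. For a test $A_\infty$-category $\cD$ we want to show that restriction along $\iota\colon\cA\to\cL_W$ is fully faithful on mapping spaces and identifies the essential image with those functors sending $W$ to cohomologically invertible morphisms. My strategy is to first reduce to the stable case (replacing $\cD$ by $\Tw\cD$ via Remark~\ref{remark. universal property of Tw}, since $\hom_{\Ainftycat}(\cX,\cD)\simeq\hom_{\Ainftycat}(\cX,\Tw\cD)$ is \emph{not} automatic—but $\hom_{\Ainftycat}(\cX,\Tw\cD)$ does recover $\hom_{\Ainftycat}(\Tw\cX,\Tw\cD)$, and every functor $\cA\to\cD$ lands in $\cD\subset\Tw\cD$, so it is enough to test against stable $\cD$ after showing that "inverting $W$" and "sending cones of $W$ to zero" match up). Then I would use Proposition~\ref{prop. pushout diagram is automatic for quotients} to control $(\Tw\cA)/\cB_W$, and finally deal with the full-subcategory step.

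The key chain of equivalences I would establish, for stable $\cD$, is
\eqnn
\hom_{\Ainftycat}(\cL_W,\cD)
\;\hookleftarrow\;
\hom_{\Ainftycat}((\Tw\cA)/\cB_W,\cD)
\;\simeq\;
\hom_{\Ainftycat}(\Tw\cA,\cD)^{\cB_W\to 0}
\;\simeq\;
\hom_{\Ainftycat}(\cA,\cD)^{W\text{ invertible}}.
\eqnd
The middle equivalence is exactly Proposition~\ref{prop. pushout diagram is automatic for quotients}, which tells us the quotient's mapping space is the union of components of $\hom_{\Ainftycat}(\Tw\cA,\cD)$ spanned by functors killing $\cB_W$. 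The rightmost identification uses: (i) $\hom_{\Ainftycat}(\Tw\cA,\cD)\simeq\hom_{\Ainftycat}(\cA,\cD)$ for stable $\cD$ (Remark~\ref{remark. universal property of Tw}, since $\cD$ stable implies $\cD\simeq\Tw\cD$); and (ii) the observation that a functor $\cA\to\cD$ sends $w\in W$ to a cohomological equivalence if and only if its stable extension $\Tw\cA\to\cD$ sends the mapping cone of $w$ to a zero object—this is because in a stable category a morphism is an equivalence iff its cone is zero, and the cone is computed in the image of $\Tw$. The leftmost map is the restriction to the full subcategory $\cL_W$; I would argue it is an equivalence onto the components hitting functors that factor (up to equivalence) through $\cL_W$, i.e. functors $\cA\to\cD$ whose essential image lands anywhere—so in fact this restriction is \emph{already} an equivalence on the relevant components, because any functor $(\Tw\cA)/\cB_W\to\cD$ that kills $\cB_W$ is, up to equivalence, determined by its restriction to the image of $\cA$ together with the stable-closure/quotient universal properties. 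More carefully: $\cL_W\to(\Tw\cA)/\cB_W$ induces an equivalence on mapping \emph{spaces out of} it precisely onto those functors that, pre-composed with $\cA\to(\Tw\cA)/\cB_W$, agree; so the composite $\cA\to\cL_W$ has the same universal mapping property as $\cA\to(\Tw\cA)/\cB_W$ when the target is arbitrary, after restricting $(\Tw\cA)/\cB_W$-functors to their values on objects of $\cA$.

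I expect the main obstacle to be the final step—identifying $\hom_{\Ainftycat}(\cL_W,\cD)$ with the correct subspace of $\hom_{\Ainftycat}(\cA,\cD)$ for \emph{arbitrary} (not necessarily stable) $\cD$, since $\cL_W$ itself need not be stable and the clean equivalences above only hold after passing to stable targets. The resolution I would use: a functor $\cA\to\cD$ inverting $W$ is the same as a functor $\Tw\cA\to\Tw\cD$ sending $\cB_W$ to zero (fully faithfulness of $\cA\to\Tw\cA$ plus the cone criterion), which by Proposition~\ref{prop. pushout diagram is automatic for quotients} is a functor $(\Tw\cA)/\cB_W\to\Tw\cD$, which restricts to a functor $\cL_W\to\Tw\cD$; conversely the essential image of $\cA$ in $(\Tw\cA)/\cB_W$ is $\cL_W$ by construction, so precomposition $\hom(\cL_W,\Tw\cD)\to\hom(\cA,\Tw\cD)$ is fully faithful (restricting along a functor that is essentially surjective with the relevant full-faithfulness), and one checks it lands on exactly the $W$-inverting functors by running the cone criterion in reverse. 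Then one undoes the $\Tw\cD$ replacement: a functor $\cL_W\to\cD$ is the restriction of a functor $\cL_W\to\Tw\cD$ landing in $\cD$, and the same for $\cA\to\cD$, so the fully-faithfulness and essential-image statements descend. Throughout, naturality in $\cD$ is handled exactly as at the end of the proof of Proposition~\ref{prop. pushout diagram is automatic for quotients}, by exhibiting these mapping spaces as fibers of coCartesian fibrations over $\Ainftycat$.
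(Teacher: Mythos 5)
Your overall architecture matches the paper's: reduce to stable targets by replacing $\cD$ with $\Tw\cD$ and viewing $\hom_{\Ainftycat}(\cA,\cD)$ as a union of components of $\hom_{\Ainftycat}(\Tw\cA,\Tw\cD)$; use Proposition~\ref{prop. pushout diagram is automatic for quotients} to identify functors out of $(\Tw\cA)/\cB_W$ with functors out of $\Tw\cA$ killing $\cB_W$; and translate ``kills $\cB_W$'' into ``inverts $W$'' via the cone criterion in a stable target (this is exactly the paper's Lemma~\ref{lemma. localization for stable things}). All of that is sound, and the paper packages it as an iterated pullback of mapping spaces.

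The gap is in the step you yourself flag as the main obstacle: passing from $(\Tw\cA)/\cB_W$ to its full subcategory $\cL_W$. You justify fully faithfulness of the restriction $\hom_{\Ainftycat}(\cL_W,\Tw\cD)\to\hom_{\Ainftycat}(\cA,\Tw\cD)$ by saying you are ``restricting along a functor that is essentially surjective with the relevant full-faithfulness.'' But $\cA\to\cL_W$ is essentially surjective and \emph{not} fully faithful (the morphism complexes change---that is the point of localizing), and restriction along an essentially surjective functor need not be fully faithful on functor spaces (consider $\ast\to BG$). Likewise, your claim that a functor $(\Tw\cA)/\cB_W\to\cD$ is ``determined by its restriction to the image of $\cA$'' is precisely the statement requiring proof: it amounts to showing that $(\Tw\cA)/\cB_W$ is generated under cones and shifts by the image of $\cA$, i.e.\ that the natural map $\Tw\cL_W\to\Tw\bigl((\Tw\cA)/\cB_W\bigr)$ is an equivalence. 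The paper isolates this as Lemma~\ref{lemma. Tw Lw}, proved by checking that the quotient functor $\Tw\cA\to(\Tw\cA)/\cB_W$ is essentially surjective and exact, so every object of the quotient is an iterated cone of objects of $\cL_W$. With that lemma, one gets $\hom(\cL_W,\Tw\cD)\simeq\hom(\Tw\cL_W,\Tw\cD)\simeq\hom(\Tw((\Tw\cA)/\cB_W),\Tw\cD)\simeq\hom((\Tw\cA)/\cB_W,\Tw\cD)$ by two applications of the $\Tw$-adjunction, and fully faithfulness of the composite into $\hom(\cA,\Tw\cD)$ then follows from the quotient's universal property rather than from any property of $\cA\to\cL_W$ directly. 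You need to supply this generation statement to close the argument.
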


We first begin with a stable analogue:

\begin{lemma}\label{lemma. localization for stable things}
Let $\cD$ be stable.
Assume $\cA$ (and hence $\Tw \cA$) is cofibrant. Then restriction along $\iota: \Tw \cA \to \cL_W$ induces an inclusion of connected components
	\eqnn
	\hom_{\Ainftycat}((\Tw \cA) / \cB_W, \cD)
	\to
	\hom_{\Ainftycat}(\Tw \cA, \cD).
	\eqnd
where the essential image of this inclusion consists of those functors sending every morphism in $W$ to an equivalence.
\end{lemma}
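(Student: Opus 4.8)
The plan is to reduce to Proposition~\ref{prop. pushout diagram is automatic for quotients} and then translate its conclusion from ``sends $\cB_W$ to zero objects'' into ``inverts $W$''. By Definition~\ref{defn. Aoo quotient}, the functor $\Tw\cA \to (\Tw\cA)/\cB_W$ fits into a pushout square in $\Ainftycat$ whose other two corners are $\cB_W$ and $0$, with top edge the inclusion $\cB_W \subset \Tw\cA$. Since $\Tw\cA$ is stable and $(\Tw\cA)/\cB_W$ is again stable (a quotient of a stable $A_\infty$-category by a full subcategory is stable — equivalently, the cofiber of a map of stable $\infty$-categories is stable, as one sees after passing to nerves), Proposition~\ref{prop. pushout diagram is automatic for quotients} applies with $\cA := \Tw\cA$, $\cB := \cB_W$, $\cQ := (\Tw\cA)/\cB_W$. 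It then tells us that for every stable $\cD$ the restriction map $\hom_{\Ainftycat}((\Tw\cA)/\cB_W, \cD) \to \hom_{\Ainftycat}(\Tw\cA,\cD)$ is injective on $\pi_0$, a homotopy equivalence onto each component in its image, and that this image is spanned exactly by those functors $f\colon \Tw\cA \to \cD$ sending every object of $\cB_W$ to a zero object of $\cD$. (If one prefers to bypass stability of the quotient, these three conclusions follow directly from the pushout universal property together with the observation — used in the proof of Proposition~\ref{prop. pushout diagram is automatic for quotients}, and valid as soon as $\cD$ has a zero object — that ``sending $\cB_W$ to zero objects'' is a property, not extra data.)

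It remains to identify the condition ``$f$ sends every object of $\cB_W$ to a zero object'' with ``$f$ inverts every $w\in W$''. Fix such an $f$. Because $\cD$ is stable and $f$ is $R$-linear, $f$ is exact by Corollary~\ref{cor. linear is exact}, so $f$ carries mapping cone sequences to mapping cone sequences. Given $w\colon X\to Y$ in $W$ with mapping cone $C(w)$ — and recall from Notation~\ref{notation. cB_W} that the objects of $\cB_W$ are precisely such cones — exactness of $f$ gives that $f(C(w))$ is a mapping cone of $f(w)\colon f(X)\to f(Y)$ in $\cD$. Now invoke the standard fact that in a stable $A_\infty$-category the cone of a morphism is a zero object if and only if the morphism is an equivalence: a morphism of $\cD$ is an equivalence exactly when the induced morphism of representable $\cD$-modules is an equivalence, which — since the Yoneda image of a stable category is closed under cones and Yoneda is cohomologically fully faithful — happens exactly when its cofiber module, represented by the cone object, is the zero module, i.e.\ when the cone object is a zero object. (At the chain level this is the long exact sequences attached to the cone triangle, $\hom(T,-)$ and $\hom(-,T)$, plus a Yoneda argument.) Applying this to $f(w)$: the object $f(C(w))$ is a zero object of $\cD$ iff $f(w)$ is an equivalence.

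Combining the two paragraphs: $f$ sends every object of $\cB_W$ to a zero object $\iff$ $f(C(w))$ is a zero object for every $w\in W$ $\iff$ $f(w)$ is an equivalence for every $w\in W$. Hence the essential image of $\hom_{\Ainftycat}((\Tw\cA)/\cB_W, \cD) \to \hom_{\Ainftycat}(\Tw\cA,\cD)$ is exactly the set of components of functors $\Tw\cA\to\cD$ inverting $W$, which is the assertion of the lemma. The \emph{main obstacle} here is not conceptual but a matter of careful bookkeeping: ensuring the quotient is genuinely stable so that Proposition~\ref{prop. pushout diagram is automatic for quotients} applies on the nose (rather than reproving the needed portion of its proof), and pinning down the elementary but slightly fiddly chain-level fact that a morphism in a stable $A_\infty$-category is an equivalence precisely when its mapping cone vanishes.
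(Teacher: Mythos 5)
Your proof is correct and follows essentially the same route as the paper's: apply Proposition~\ref{prop. pushout diagram is automatic for quotients} to identify the image with functors killing $\cB_W$, then use exactness (Corollary~\ref{cor. linear is exact}) and the fact that a morphism is an equivalence iff its cone is a zero object to translate this into inverting $W$. Your extra care about the stability of $(\Tw\cA)/\cB_W$ (needed to apply the proposition on the nose) is a point the paper glosses over, but it is not a different argument.
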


\begin{proof}
By the universal property of quotients, we know $\iota^*$ is fully faithful (i.e., an inclusion on connected components). We further know that the restriction along $\iota$ has essential image given by those functors $\Tw \cA \to \cD$ that are contractible along $\cB_W$. 
Because $\cD$ is stable, by Proposition~\ref{prop. Bw contractible is localizing}, the essential image thus consists of those functors sending elements of $W$ to invertible-up-to-homotopy morphisms in $\cD$. 
\end{proof}

\begin{lemma}\label{lemma. Tw Lw}
The map in $\Ainftycatt$
	\eqnn
	\Tw \cL_W \to \Tw \left((\Tw \cA) / \cB_W\right)
	\eqnd
induced by $\cL_W \to (\Tw \cA) / \cB_W$ (Definition~\ref{defn. Lw}) descends to an equivalence in $\Ainftycat$.
\end{lemma}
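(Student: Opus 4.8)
The plan is to verify that $\Tw\iota$ is fully faithful and essentially surjective, where $\iota\colon\cL_W\hookrightarrow(\Tw\cA)/\cB_W$ is the full subcategory inclusion of Definition~\ref{defn. Lw}; since an $R$-linear functor between stable categories is automatically exact (Corollary~\ref{cor. linear is exact}) and an exact, fully faithful, essentially surjective functor is an equivalence, this suffices. First I would record that $(\Tw\cA)/\cB_W$ is itself stable: it is the quotient of the pretriangulated category $\Tw\cA$ by a full subcategory, and quotients of pretriangulated categories are pretriangulated (Drinfeld~\cite{drinfeld-dg-quotients}, transported along $\dgcat\simeq\Ainftycat$, Theorem~\ref{thm. dg is Aoo}). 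Consequently $\Tw$ leaves $(\Tw\cA)/\cB_W$ unchanged up to equivalence, and $\Tw\iota$ is identified with the canonical exact extension $\Tw\cL_W\to(\Tw\cA)/\cB_W$ of $\iota$ (Remark~\ref{remark. universal property of Tw}).

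For full faithfulness: $\iota$ is fully faithful because it includes a full subcategory. The functor $\Tw\cL_W\to(\Tw\cA)/\cB_W$ restricts to $\iota$ on $\cL_W\subset\Tw\cL_W$, it is exact, and $\Tw\cL_W$ is generated from $\cL_W$ under mapping cones and shifts; an induction on the number of cones --- comparing the long exact sequences in cohomology of morphism complexes attached to a mapping cone on each side --- then propagates full faithfulness from $\cL_W$ to all of $\Tw\cL_W$. (Equivalently, one invokes the standard fact that the twisted-complex construction carries fully faithful functors to fully faithful functors.)

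For essential surjectivity: the essential image of $\Tw\iota$ contains $\cL_W$ and, since $\Tw\iota$ is exact with stable target, is closed under mapping cones and shifts. So it is enough to show that $(\Tw\cA)/\cB_W$ is generated from $\cL_W$ under cones and shifts. Now $\Tw\cA$ is generated under cones and shifts by $\cA$ (Notation~\ref{notation. Tw}); the quotient functor $\phi\colon\Tw\cA\to(\Tw\cA)/\cB_W$ is essentially surjective (the quotient in $\Ainftycat$ adjoins no new objects up to equivalence --- equivalently, Drinfeld's model~\cite{drinfeld-dg-quotients}, as well as the models $\sQ$ and $\sD$, is the identity on objects); and $\phi$ is $R$-linear between stable categories, hence exact (Corollary~\ref{cor. linear is exact}). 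Applying the exact, essentially surjective functor $\phi$ to the generation statement for $\Tw\cA$ yields the generation statement for $(\Tw\cA)/\cB_W$, with the essential image of $\cA$ --- that is, $\cL_W$ --- playing the role of the generators.

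I expect the genuine content --- the rest being the formal ``generated under cones and shifts'' bookkeeping --- to be the two model-level inputs: that the quotient $(\Tw\cA)/\cB_W$ is again stable (so that Corollary~\ref{cor. linear is exact} makes every functor in sight exact) and that the quotient functor is essentially surjective on objects. Both are supplied by an explicit model of the quotient (Drinfeld's, or the models $\sQ,\sD$ studied later in the paper), or, for the second, by a direct analysis showing the relevant $\infty$-categorical pushout introduces no new objects up to equivalence.
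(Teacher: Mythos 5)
Your proof is correct and its skeleton matches the paper's: establish full faithfulness via full faithfulness of the inclusion plus a cone induction, and essential surjectivity via essential surjectivity of the quotient functor $\Tw\cA \to (\Tw\cA)/\cB_W$ together with generation under cones and shifts. The one genuine divergence is your opening move: you collapse the target $\Tw\left((\Tw\cA)/\cB_W\right)$ to $(\Tw\cA)/\cB_W$ by asserting that the quotient of a pretriangulated category is pretriangulated, which requires importing a concrete model (Drinfeld, or $\sQ$/$\sD$) \emph{and} knowing that model is stable. The paper never needs this: it handles the outer $\Tw$ directly by observing that every object of $\Tw\left((\Tw\cA)/\cB_W\right)$ is, by definition of $\Tw$, an iterated cone of morphisms of $(\Tw\cA)/\cB_W$, so essential surjectivity reduces to hitting the objects of $(\Tw\cA)/\cB_W$ themselves --- which is exactly your lifting argument through $\phi$. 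Your route is sound but heavier; the paper's is more economical and stays within the abstract pushout definition of the quotient (modulo the shared, unargued input that the quotient functor is essentially surjective on objects). Note also that the paper's stated justification for full faithfulness (``all the unlabeled arrows in the square are fully faithful'') is terser than yours; your explicit cone induction is the honest way to fill that in.
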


\begin{proof}
We have a strictly commutating diagram in $\Ainftycatt$
	\eqnn
	\xymatrix{
	\cA \ar[r] \ar[drr] & \Tw \cA \ar[r] \ar@{-->}[drr] & (\Tw \cA)/\cB_W  \ar[r] &  \Tw \left((\Tw \cA) / \cB_W\right) \\
	&&\cL_W \ar[u] \ar[r] & \Tw \cL_W \ar[u]_j 
	}.
	\eqnd
The arrow $\cA \to \cL_W$ is  essentially surjective by definition of $\cL_W$, and extends to the dashed arrow by naturality (Proposition~\ref{prop. Tw facts}\eqref{item. A to Tw A}).

First, let us see that the map in question (which we have labeled as $j$ for the purposes of this proof)  is fully faithful. To see this, write an object $X$ of $\Tw\cL_W$ as a finite iterated extension (possibly with shifts) of objects $X'_i, i = 1, \ldots, m$ of $\cL_W$. Given any other object $Y$, write it as an extension  (possibly with shifts) of objects $Y'_k \in \cL_W, k = 1, \ldots, n$. Then, by the exactness of $j$ (Remark~\ref{remark. every functor is exact}), $j(X)$ and $j(Y)$ are extensions of $j(X_i)$ and $j(Y_k)$, respectively. Because the map $\cL_W \to \Tw\left( (\Tw\cA)/\cB_W\right)$ is fully faithful, the maps 
	\eqn\label{eqn. associated graded hom maps}
	\hom_{\Tw\cL_W}(X_i,Y_k) \to \hom_{\Tw\left( (\Tw\cA)/\cB_W\right)}(j(X_i),j(Y_k))
	\eqnd
	are quasi-isomorphisms. On the other hand, the maps 
	\eqnn
	H^*\hom_{\Tw\cL_W}(X,Y) \to H^* \hom_{\Tw\left( (\Tw\cA)/\cB_W\right)}(j(X),j(Y))
	\eqnd
fit into an iteration of maps of long exact sequences (because mapping cone sequences induces long exact sequences of cohomology groups). Four-fifths of these maps are isomorphisms of the form \eqref{eqn. associated graded hom maps}. By the Five Lemma and induction on $m,n$, fully faithfulness of $j$ follows.

Let us show that $j$ is essentially surjective. By the definition of $\Tw$, any object $Y \in \Tw\left( (\Tw\cA)/\cB_W\right)$ can be expressed as an iterated mapping cone of morphisms (and their shifts) $f_i: X_i \to X_{i+1}$ in $(\Tw \cA)/\cB_W$. Thus it suffices to show that any object $X \in (\Tw \cA)/\cB$ is in the essential image of $j$. (Then, because $j$ is fully faithful, it follows that the morphisms $f_i$ are also in the image of $j$; because $j$ is exact and $\Tw \cL_W$ is stable, we conclude $Y$ is in the essential image of $j$.)

Because the quotient map $\Tw \cA \to (\Tw \cA)/\cB_W$ is essentially surjective, each object $X \in (\Tw\cA)/\cB_W$ arises from some object $\tilde X \in \Tw \cA$. In turn, $\tilde X$ is an iterated mapping cone of morphisms $\tilde h_k: \tilde W_{k} \to \tilde W_{k+1} \in \cA$ and their shifts (again by definition of $\Tw$).

We let $h_k$ be the images of $\tilde h_k$ in $\cL_W$ (and in $\Tw \cL_W$, by abuse of notation). Because the dashed arrow $\Tw \cA \to \Tw \cL_W$ is exact, we conclude that the iterated cones of the morphisms $h_j$ and their shifts are equivalent to $X$. This establishes the claim that $j$ is essentially surjective.

Because $j$ is essentially surjective and fully faithful, we are finished.
\end{proof}

\begin{proof}[Proof of Theorem~\ref{thm. localization of A oo cats}.]
Because $\cA$ and hence $\cL_W$ are cofibrant, we may interpret $\hom_{\Ainftycat}$ as a space of functors, and we identify $\hom_{\Ainftycat}(\cA,\cD)$ as the full subspace of $\hom_{\Ainftycat}(\Tw\cA, \Tw \cD)$ spanned by those $f$ that factor $\cA$ through the essential image of $\cD \subset \Tw \cD$.

Let us now consider the iterated pullback squares of $\infty$-groupoids
	\eqnn
	\xymatrix{
	Q \ar[d]	\ar[r]		& \hom_{\Ainftycat}(\cA,\cD) \ar[d] \\
	Q' \ar[d]^{\simeq} \ar[r]  & \hom_{\Ainftycat}(\cA, \Tw \cD) \ar[d]^{\simeq}\\
	\hom_{\Ainftycat}((\Tw \cA)/ \cB_W, \Tw \cD) \ar[r]		& \hom_{\Ainftycat}(\Tw \cA, \Tw \cD).
	}
	\eqnd
(We remind the reader that in the $\infty$-category of $\infty$-groupoids, the only notion of pullback that makes sense is what one might classically call a homotopy pullback; in particular, the above are all what one might call homotopy pullback squares.)
The lower-right vertical arrow is a homotopy equivalence by adjunction; the lower-left vertical arrow is a homotopy equivalence being the pullback of a homotopy equivalence. Note that the top-right vertical arrow is fully faithful (that is, an injection on $\pi_0$ and a homotopy equivalence along each connected component). Because the bottom horizontal arrow is also fully faithful, it follows that  the top-left vertical arrow is fully faithful.

By considering the outermost rectangle, we see that $Q$ is identified as the space of functors $\cA \to \cD$ sending morphisms in $W$ to equivalences (by Lemma~\ref{lemma. localization for stable things}).

Now we examine the two inner pullback squares. First, note that one can identify $Q'$ with $\hom_{\Ainftycat}( \cL_W, \Tw \cD)$ by the definition of $\cL_W$. Here are the details: We have the composition of maps between Kan complexes
 	\begin{align}
	\hom_{\Ainftycat}( (\Tw \cA) / \cB_W, \Tw \cD)
	&\to
	\hom_{\Ainftycat}( \Tw \left((\Tw \cA) / \cB_W\right), \Tw \cD) \nonumber \\
	&\to 
	\hom_{\Ainftycat}( \Tw \cL_W, \Tw \cD) \nonumber
	\\ & \to
	\hom_{\Ainftycat}( \cL_W, \Tw \cD). \nonumber
	\end{align}
The first arrow is a homotopy equivalence by the universal property of $\Tw$ (i.e., adjunction---see Remark~\ref{remark. universal property of Tw}).
The second arrow is a homotopy equivalence by Lemma~\ref{lemma. Tw Lw}, and the third is a homotopy equivalence by the universal property of $\Tw$ yet again.

Tracing through these equivalences, we see that the horizontal arrow from $Q'$ can further be identified with restriction along $\cA \to \cL_W$. Because $\cA \to \cL_W$ is essentially surjective, this gives a second identification of $Q$: The space of functors from $\cL_W$ to $\Tw \cD$ essentially factoring through $\cD$; that is, functors from $\cL_W$ to $\cD$. 
\end{proof}

\subsection{Naturality of localizations}

\begin{proposition}\label{prop. naturality of localizations}
Let $\cC$ be an ordinary category, and fix a functor $F: \cC \to \Ainftycatt$. Moreover suppose that for every $x \in \cC$, we have chosen a collection $W_x \subset H^0\hom_{F(x)}$, and that for every morphism $x \to y$ in $\cC$, we have that $W_x$ has image contained in $W_y$.

Then localization induces a functor of $\infty$-categories
	\eqnn
	N(\cC) \to \Ainftycat
	\eqnd
sending $x$ to $F(x)[W_x^{-1}]$, and sending a morphism $x \to y$ to the induced functor $F(x)[W_x^{-1}] \to F(y)[W_y^{-1}]$.
\end{proposition}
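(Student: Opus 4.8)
The plan is to realize the assignment $x \mapsto F(x)[W_x^{-1}]$ as a composite of $\infty$-categorical functors, exploiting the explicit construction of localization given in Definition~\ref{defn. Lw} together with the functoriality of each ingredient in that construction. The key observation is that every piece of the construction $F(x) \leadsto \Tw F(x) \leadsto (\Tw F(x))/\cB_{W_x} \leadsto \cL_{W_x}$ is already functorial at the appropriate level, so it suffices to chain these functorialities together.

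First I would record that $\Tw$ is a functor of $\infty$-categories $\Ainftycat \to \Ainftycat$ (Remark~\ref{remark. Tw idempotent}), and that it is even modeled by a strict functor $\Tw': \Ainftycatt \to \Ainftycatt$ of ordinary categories via the twisted-complex construction. Hence $x \mapsto \Tw F(x)$ is realized by the strict composite $N(\cC) \xrightarrow{N(F)} N(\Ainftycatt) \to \Ainftycat \xrightarrow{\Tw} \Ainftycat$. Second, I would observe that the hypothesis ``$W_x$ has image in $W_y$ for each $x \to y$'' implies that $\cB_{W_x} \subset \Tw F(x)$ is carried into $\cB_{W_y} \subset \Tw F(y)$ by the induced functor, since $\Tw$ of a functor is exact (Corollary~\ref{cor. linear is exact}) and hence sends cones of elements of $W_x$ to cones of their images in $W_y$. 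This gives a functor $N(\cC) \to \fun(\Delta^1,\Ainftycat)$ sending $x$ to the arrow $\cB_{W_x} \to \Tw F(x)$; more precisely, one packages the data $(\cB_{W_x} \subset \Tw F(x))$ together with the zero object into a functor $N(\cC) \to \fun(\Delta^1 \times \Delta^1, \Ainftycat)$ landing in the span $0 \leftarrow \cB_{W_x} \to \Tw F(x)$. Composing with the colimit (pushout) functor $\fun(\Delta^1\times\Delta^1, \Ainftycat) \to \Ainftycat$—which exists because $\Ainftycat$ has pushouts by Theorem~\ref{thm. dg is Aoo} and the model-category argument in Remark ``Quotients exist''—yields the functor $x \mapsto (\Tw F(x))/\cB_{W_x}$. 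Finally, passing to the full subcategory $\cL_{W_x}$ on the essential image of $F(x)$ is functorial: for a morphism $x \to y$, the induced functor $(\Tw F(x))/\cB_{W_x} \to (\Tw F(y))/\cB_{W_y}$ carries objects in the image of $F(x)$ to objects in the image of $F(y)$, so it restricts to a functor $\cL_{W_x} \to \cL_{W_y}$; this restriction-to-essential-image operation is itself functorial (it is a pullback along the subcategory inclusions, or equivalently an application of the functor sending an arrow with a chosen sub-object on the source to the corresponding arrow on full subcategories). Theorem~\ref{thm. localization of A oo cats} identifies the result at each object with $F(x)[W_x^{-1}]$, which gives the desired functor $N(\cC) \to \Ainftycat$.

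The main obstacle, I expect, is making precise the step ``restrict to the full subcategory on the essential image'' in an honestly functorial (rather than merely object-wise) way, since ``essential image'' is not strictly a subobject but is defined up to equivalence. The clean way to handle this is to note that $\cL_{W_x}$ is equivalently characterized by the universal property in Theorem~\ref{thm. localization of A oo cats}: $\cA \to \cL_{W_x}$ is \emph{the} localization, and localization is characterized by a mapping-property that is manifestly functorial in $\cC$. Concretely, one can instead build the functor $N(\cC) \to \fun(\Delta^1,\Ainftycat)$, $x \mapsto (F(x) \to (\Tw F(x))/\cB_{W_x})$, then post-compose with the ``source-full-subcategory-generated'' endofunctor of $\fun(\Delta^1,\Ainftycat)$ that sends an arrow $\cA \to \cE$ to the arrow $\cA \to \langle\text{image of }\cA\rangle$; this endofunctor exists because one can define it via the (functorial) factorization of a functor into an essentially surjective functor followed by a fully faithful one. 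Alternatively—and this is probably the least painful route—one simply invokes that localization along $W$, being defined by a universal property (Definition~\ref{defn. localization of Aoo cat} and Lemma~\ref{lemma. localization conditions}), assembles into a functor whenever the inclusions $W_x \subset W_y$ are compatible: this is a formal consequence of the uniqueness-up-to-contractible-choice in the universal property, applied to the diagram indexed by $N(\cC)$, exactly as one shows that any left adjoint (here, ``localize along $W_x$,'' relative to a coCartesian-fibration packaging of the $F(x)$ and $W_x$) is functorial. I would present the argument in this last form, citing Theorem~\ref{thm. localization of A oo cats} for the object-wise identification and the universal-property formulation for functoriality, and only sketch the coCartesian-fibration bookkeeping needed to turn the pointwise universal properties into a genuine functor out of $N(\cC)$.
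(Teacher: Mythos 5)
Your proposal is correct and follows essentially the same route as the paper: build the diagram of arrows $\cB_{W_x} \to \Tw F(x)$ over $\cC$, use functoriality of $\Tw$ and naturality of pushouts to get the arrows $F(x) \to (\Tw F(x))/\cB_{W_x}$, and then pass to essential images. Your extra care about making the ``restrict to the essential image'' step honestly functorial (via the essentially-surjective/fully-faithful factorization or the universal property) addresses a point the paper's own proof leaves implicit, but it is a refinement of the same argument rather than a different one.
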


\begin{proof}
$F$ defines a diagram $\cC \times \Delta^1 \to \Ainftycat$ whose value at $\{x\} \times \Delta^1$ is given by
	\eqnn
	\cB_{W_x} \to \Tw F(x).
	\eqnd
By the naturality of colimits (in particular, quotients) and functoriality of $\Tw$, we then have an induced diagram $\cC \times \Delta^1 \to \Ainftycat$ whose value on $\{x\} \times \Delta^1$ is
	\eqnn
	a_x: F(x) \to (\Tw F(x)) / \cB_{W_x}.
	\eqnd
We have the map $N(\cC) \to \Ainftycat$ by taking induced arrows among the essential images of $a_x$.
\end{proof}

\subsection{Model for localizations}

\begin{theorem}\label{thm. LM model of localization}
Let $\cA$ be an arbitrary $\kk$-linear $A_\infty$-category and $W \subset H^0\hom_{\cA}$ a collection of (cohomology classes of) morphisms. Then the localization $A[W^{-1}]$ (Definition~\ref{defn. localization of Aoo cat}) is equivalent to the full subcategory of $\sD( \Tw \cA | \cB_W)$ spanned by the essential image of the composite $\cA \to \Tw \cA \to \sD(\Tw \cA | \cB_W)$.
\end{theorem}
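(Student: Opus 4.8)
The plan is to reduce to Theorem~\ref{thm. localization of A oo cats}: I will recognize $\sD(\Tw\cA|\cB_W)$ as a model for the $\infty$-categorical quotient $(\Tw\cA)/\cB_W$, match the two relevant full subcategories of essential images, and then invoke the universal property of localization to replace $\cL_W$ by $\cA[W^{-1}]$.

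First I would set up the quotient comparison. Since $\Tw\cA$ is stable (Notation~\ref{notation. Tw}) and $\cB_W\subset\Tw\cA$ is a full subcategory (Notation~\ref{notation. cB_W}), Theorem~\ref{theorem. A oo quotient} applies to the pair $(\Tw\cA,\cB_W)$ and tells us that the structure functor $\Tw\cA\to\sD(\Tw\cA|\cB_W)$ completes to a pushout square along $0\leftarrow\cB_W\to\Tw\cA$ in $\Ainftycat$. By Definition~\ref{defn. Aoo quotient}, $(\Tw\cA)/\cB_W$ is precisely this pushout. Because pushouts in an $\infty$-category are unique up to a contractible space of equivalences, the two cocones under $0\leftarrow\cB_W\to\Tw\cA$ induce an equivalence $\Phi\colon(\Tw\cA)/\cB_W\xra{\ \sim\ }\sD(\Tw\cA|\cB_W)$ compatible (up to coherent homotopy) with the two structure functors out of $\Tw\cA$. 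Precomposing with $\cA\to\Tw\cA$, the triangle $\cA\to\Tw\cA\to(\Tw\cA)/\cB_W$ is carried by $\Phi$ to $\cA\to\Tw\cA\to\sD(\Tw\cA|\cB_W)$.

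Next I would transport essential images across $\Phi$. An equivalence of $A_\infty$-categories carries the essential image of a functor onto that of the conjugated functor, and restricts to an equivalence between the full subcategories they span; so $\Phi$ restricts to an equivalence between the full subcategory of $(\Tw\cA)/\cB_W$ on the essential image of $\cA\to\Tw\cA\to(\Tw\cA)/\cB_W$ and the full subcategory of $\sD(\Tw\cA|\cB_W)$ on the essential image of $\cA\to\Tw\cA\to\sD(\Tw\cA|\cB_W)$, compatibly with the functors from $\cA$. By Definition~\ref{defn. Lw} the former full subcategory is exactly $\cL_W$; hence the full subcategory named in the statement is equivalent, under $\cA$, to $\cL_W$. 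Finally, Theorem~\ref{thm. localization of A oo cats} says $\cA\to\cL_W$ exhibits $\cL_W$ as a localization of $\cA$ along $W$ in the sense of Definition~\ref{defn. localization of Aoo cat}, and that universal property determines a localization up to equivalence (indeed up to contractible choice), so $\cL_W\simeq\cA[W^{-1}]$; composing the two equivalences gives the claim.

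The step I expect to be the main obstacle is the bookkeeping in the middle: I must make sure the abstract equivalence of pushout models is an equivalence of objects \emph{under $\cA$} --- i.e.\ that the pushout property genuinely supplies a homotopy-commuting triangle over $\Tw\cA$, hence over $\cA$ --- since this compatibility is exactly what forces the two essential-image full subcategories to correspond under $\Phi$. Once this is pinned down, the identification of essential images and the appeal to uniqueness of localizations are formal.
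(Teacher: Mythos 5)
Your proposal is correct and follows exactly the route the paper takes: its entire proof is ``Combine Theorem~\ref{theorem. A oo quotient} with Theorem~\ref{thm. localization of A oo cats},'' and your argument is simply a careful unpacking of that combination (identifying $\sD(\Tw\cA|\cB_W)$ with the pushout $(\Tw\cA)/\cB_W$ via the quotient theorem, transporting essential images to recover $\cL_W$, and invoking the localization theorem). The bookkeeping you flag---that the equivalence of pushout cocones is an equivalence under $\Tw\cA$, hence under $\cA$---is a legitimate detail the paper leaves implicit, and your treatment of it is fine.
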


\begin{proof}
Combine Theorem~\ref{theorem. A oo quotient} with Theorem~\ref{thm. localization of A oo cats}.
\end{proof}

\subsection{Morphisms in a localization}

Fix an $A_\infty$-category $\cA$ and a collection of (cohomology classes of) morphisms $W \subset H^0\hom_{\cA}$.
Fix two objects $X, Y \in \cA$. By Corollary~\ref{thm. LM model of localization}, the morphism complex $\hom_{\cA[W^{-1}]}(X,Y)$ in the localized category may be computed as 
	\eqnn
	\hom_{\sD(\Tw \cA | \cB_W)}(X,Y).
	\eqnd
By definition~\cite{lyubashenko-manzyuk, gps-covariant}, this complex is given by a bar-type construction:
	\begin{gather}
	\vdots \nonumber  \\
	\oplus_{Z_1, Z_2 \in \cB_W} \hom_{\Tw\cA}(X,Z_1) \tensor  \hom_{\Tw\cA}(Z_1,Z_2) \tensor  \hom_{\Tw\cA}(Z_2, Y) \nonumber \\
		\downarrow \nonumber \\
	\oplus_{Z_1 \in \cB_W} \hom_{\Tw\cA}(X,Z_1) \tensor  \hom_{\Tw\cA}(Z_1, Y) \nonumber \\
		\downarrow \nonumber \\
	\hom_{\cA}(X,Y) \label{eqn. localization hom as bar complex}
	\end{gather}
We use the following lemma to compute morphism complexes in our wrapped Fukaya categories in \cite{oh-tanaka-actions}. A dual assertion is made in Lemmas~3.11 and 3.14 of~\cite{gps-covariant}.

\begin{lemma}\label{lemma. local sequences localize}
Fix cofibrant $A_\infty$-category $\cA$.
Fix also a sequence of objects $Y_0 \to Y_1 \to \ldots$ in $\cA$. (That is, a collection of objects $Y_i$ equipped with morphisms $Y_{i} \to Y_{i+1}$.) Suppose moreover that for any morphism $Q \to Q'$ in $W$, the induced map
	\eqn\label{eqn. lemma localize sequence hypothesis}
	\hocolim_i \hom_{\cA} (Q', Y_i) \to
	\hocolim_i \hom_{\cA} (Q, Y_i)
	\eqnd
is a quasi-isomorphism. Then for any $X \in \cA$, the induced map
	\eqnn
	\hocolim_i \hom_{\cA} (X,Y_i)
	\to
	\hocolim_i \hom_{\cA[W^{-1}]}(X,Y_i)
	\eqnd
is a quasi-isomorphism.
\end{lemma}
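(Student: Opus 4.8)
The plan is to argue directly on the explicit bar complex \eqref{eqn. localization hom as bar complex}, in the spirit of Lemmas~3.11 and~3.14 of~\cite{gps}. By Theorem~\ref{thm. LM model of localization}, for each $i$ the complex $\hom_{\cA[W^{-1}]}(X,Y_i)$ is computed by the bar complex \eqref{eqn. localization hom as bar complex} for $\sD(\Tw\cA|\cB_W)$, and the map in the statement is the inclusion of its bottom (word-length $0$) summand $\hom_\cA(X,Y_i)$ into the total complex. This total complex carries an increasing exhaustive filtration by subcomplexes $F^0 \subset F^1 \subset \cdots$, with $F^0 = \hom_\cA(X,Y_i)$ and with word-length-$k$ subquotient, for $k \geq 1$ and up to shift, equal to
\eqnn
B^k_i \;:=\; \bigoplus_{Z_1,\dots,Z_k \in \cB_W} \hom_{\Tw\cA}(X,Z_1)\tensor\hom_{\Tw\cA}(Z_1,Z_2)\tensor\cdots\tensor\hom_{\Tw\cA}(Z_{k-1},Z_k)\tensor\hom_{\Tw\cA}(Z_k,Y_i).
\eqnd
The goal is to show that after applying $\hocolim_i$ every subquotient $B^k_i$ with $k\geq 1$ becomes acyclic; the lemma then follows by climbing the filtration (long exact sequences) and passing to the colimit, since homology commutes with filtered colimits and $F^0 \into \mathrm{Tot}$ is precisely the map in question.

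The first observation is that $\hocolim_i$ --- modelled as the mapping telescope --- is exact, commutes with arbitrary direct sums, commutes with $\tensor$-ing by a fixed complex, and commutes with the sequential colimit $\colim_k F^k$ that assembles the total complex (these are all filtered or coproduct constructions). Hence $\hocolim_i\hom_{\cA[W^{-1}]}(X,Y_i)$ again carries such a filtration, with bottom piece $\hocolim_i\hom_\cA(X,Y_i)$ and with $k$-th subquotient ($k\geq 1$, up to shift)
\eqnn
\bigoplus_{Z_1,\dots,Z_k \in \cB_W} \hom_{\Tw\cA}(X,Z_1)\tensor\cdots\tensor\hom_{\Tw\cA}(Z_{k-1},Z_k)\tensor\big(\hocolim_i\hom_{\Tw\cA}(Z_k,Y_i)\big),
\eqnd
since $X$ and the $Z_j$ are fixed and only the last factor depends on $i$.

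Next I would establish that $\hocolim_i\hom_{\Tw\cA}(Z,Y_i)$ is acyclic for every $Z\in\cB_W$. By Notation~\ref{notation. cB_W}, such a $Z$ is the mapping cone in $\Tw\cA$ of a chain-level representative $f\colon Q\to Q'$ of an element of $W$, with $Q,Q'\in\cA$; the twisted-complex formalism identifies $\hom_{\Tw\cA}(Z,Y_i)$ at the chain level with $\cone\big(f^*\colon\hom_\cA(Q',Y_i)\to\hom_\cA(Q,Y_i)\big)[-1]$, where we use that $\cA\to\Tw\cA$ is fully faithful to replace $\hom_{\Tw\cA}(Q,Y_i)$ by $\hom_\cA(Q,Y_i)$, and similarly for $Q'$. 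Since $\hocolim_i$ is exact it commutes with this cone, so $\hocolim_i\hom_{\Tw\cA}(Z,Y_i)\simeq\cone\big(\hocolim_i\hom_\cA(Q',Y_i)\to\hocolim_i\hom_\cA(Q,Y_i)\big)[-1]$, which is acyclic by hypothesis \eqref{eqn. lemma localize sequence hypothesis}. To conclude, fix $k\geq 1$: each summand of the $k$-th subquotient of $\hocolim_i\hom_{\cA[W^{-1}]}(X,Y_i)$ has the form $C\tensor A$ with $A=\hocolim_i\hom_{\Tw\cA}(Z_k,Y_i)$ acyclic and $C=\hom_{\Tw\cA}(X,Z_1)\tensor\cdots\tensor\hom_{\Tw\cA}(Z_{k-1},Z_k)$. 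Every hom-complex of $\Tw\cA$ between objects built out of $\cA$ is a finite iterated mapping cone of shifts of complexes of the form $\hom_\cA(-,-)$, and $K$-flatness is inherited by shifts, finite direct sums, mapping cones and tensor products; since $\hom_\cA(-,-)$ is $K$-flat by hypothesis, $C$ is $K$-flat, so $C\tensor A$ is acyclic, and hence so is the direct sum. Thus every positive-word-length subquotient is acyclic and the inclusion $\hocolim_i\hom_\cA(X,Y_i)\into\hocolim_i\hom_{\cA[W^{-1}]}(X,Y_i)$ --- which is the map of the statement --- is a quasi-isomorphism.

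The conceptual content is light; the real work is in two pieces of bookkeeping. First, one must check carefully that $\hocolim_i$ commutes with the direct-sum totalization defining \eqref{eqn. localization hom as bar complex}; this is where it is convenient that the totalization is a coproduct totalization of a complex bounded below in word-length, and it is also where the $K$-flatness (homotopical flatness) hypothesis silently earns its keep, making the naive tensor products in \eqref{eqn. localization hom as bar complex} homotopy-invariant. Second, the chain-level identification of $\hom_{\Tw\cA}(\cone f,Y_i)$ with a shifted cone has to be reconciled with one's chosen twisted-complex sign conventions, and one must keep in mind that $W$ consists only of cohomology classes, so a representative $f$ is chosen and the argument checked to be independent of that choice up to quasi-isomorphism. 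Neither is hard, but both demand care with the total-complex formalism; this is the main obstacle.
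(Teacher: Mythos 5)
Your proposal is correct and follows essentially the same route as the paper's own proof: pass to the bar complex \eqref{eqn. localization hom as bar complex}, commute the mapping telescope past the direct sums and tensor products, show $\hocolim_i\hom_{\Tw\cA}(Z_k,Y_i)$ is acyclic via the cone sequence for $Q\to Q'\to Z_k$ together with hypothesis \eqref{eqn. lemma localize sequence hypothesis}, use inheritance of $K$-flatness by $\Tw\cA$ to kill each positive-word-length term, and finish with the length filtration. Your write-up is somewhat more explicit about the bookkeeping (exhaustiveness of the filtration, choice of chain-level representatives of $W$), but there is no substantive difference in the argument.
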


\begin{remark}
For the sake of coherence (and to define the homotopy colimit), one should extend the collection of morphisms $Y_i \to Y_{i+1}$ to a functor $\ZZ_{\geq 0} \to N(\cA)$, where $N(\cA)$ is the $A_\infty$-nerve and $\ZZ_{\geq 0}$ is the partially ordered set of non-negative integers. However, there is a contractible choice of such extensions, so we ignore this detail.
\end{remark}

\begin{proof}
A sequential homotopy colimit may be computed using a mapping telescope construction. The telescope construction commutes with direct sums and tensor products, so for any $i$,
	\eqnn
	\hocolim_i \hom_{\cA[W^{-1}]}(X,Y_i)
	\eqnd
may be computed by a chain complex built from a simplicial object (just as \eqref{eqn. localization hom as bar complex} is) where the $k$-simplices are as follows:
	\begin{align}
	\bigoplus_{Z_1,\ldots,Z_k}
	&
	\hom_{\Tw\cA}(X,Z_1)
	\tensor \hom_{\Tw \cA}(Z_1,Z_2)
	\tensor \ldots 
	\nonumber \\
	&\tensor \hom_{\Tw \cA}(Z_{k-1},Z_k)
	\tensor
	\left( \hocolim_i  \hom_{\Tw\cA}(Z_k,Y_i)
	\right)\nonumber 
	\end{align}
We now claim that for every $k$, $\hocolim_i  \hom_{\Tw\cA}(Z_k,Y_i)$ is acyclic because $Z_k$ arises as a cone of a morphism $Q \to Q'$ in $W$. That is, the mapping cone sequence $Q \to Q' \to Z_k$ induces a mapping cone sequence
	\eqnn
	\hom_{\Tw\cA}(Z_k,Y_i) \to
	\hom_{\Tw\cA}(Q',Y_i) \to
	\hom_{\Tw\cA}(Q,Y_i)
	\eqnd
of cochain complexes; since the mapping telescope construction commutes with mapping cones, the diagram
	\eqnn
	\xymatrix{
	\hocolim_i \hom_{\Tw\cA}(Z_k,Y_i) \ar[r] \ar[d]
		& \hocolim_i \hom_{\Tw\cA}(Q',Y_i) \ar[d] \\
	0 \ar[r]
		& \hocolim_i \hom_{\Tw\cA}(Q,Y_i)
	}
	\eqnd
is still a homotopy pushout; in particular, the right vertical map being a quasi-isomorphism by assumption~\eqref{eqn. lemma localize sequence hypothesis}, the top-left cochain complex is acyclic.

Finally, we note that if each morphism complex in $\cA$ is homotopically flat (i.e., $\cA$ is cofibrant), the same holds for $\Tw \cA$ (as morphism complexes are defined as iterated mapping cones and shifts of the morphism complexes of $\cA$). This means that all the terms in the bar complex are acyclic except for the 0-simplex term, which is $\hocolim_i \hom_{\cA}(X,Y_i)$ to begin with. Now a standard argument, for example using the length filtration and seeing that the associated graded are all acyclic, gives the statement we desire.
\end{proof}

\bibliographystyle{plain}
\bibliography{20240601-biblio}

\end{document}